    \def\a{\alpha}   \def\({\left (} \def\){\right )}
\def\<{\left\langle} \def\>{\right\rangle}
\newtheorem{acknowledgement}{Acknowledgement}
 \newcommand{\abs}[1]{\left\vert#1\right\vert}  
\newcommand{\R}{\mathbb R}
\newtheorem{Theorem1}{Theorem} \newtheorem{Theorem2}[Theorem1]{Theorem} \newtheorem{Theorem3}[Theorem1]{Theorem}
\newtheorem{Lemma1}{Lemma}[section]  \newtheorem{Lemma3}[Lemma1]{Lemma} \newtheorem{Lemma4}[Lemma1]{Lemma} \newtheorem{Lemma5}[Lemma1]{Lemma}
\newtheorem{Lemma6}[Lemma1]{Lemma} \newtheorem{Lemma7}[Lemma1]{Lemma}  \newtheorem{Corollary}[Lemma1]{Corollary}
\newtheorem{Proposition}[Lemma1]{Proposition}
\begin{document}

\title{The energy identity for a sequence of Yang-Mills $\alpha$-connections}

\author {Min-Chun Hong and Lorenz Schabrun}

\address{Min-Chun Hong, Department of Mathematics, The University of Queensland\\ Brisbane, QLD 4072, Australia}  \email{hong@maths.uq.edu.au} \address{Lorenz Schabrun, Beijing International Center for Mathematics Research (BICMR), Peking University\\Beijing, China}  \email{}

\begin{abstract}
 We prove that the Yang-Mills $\alpha$-functional satisfies the Palais-Smale condition, implying the existence of critical points, which are called Yang-Mills $\alpha$-connections.  It was shown in \cite{HTY} that as $\alpha \to 1$,   a sequence  of Yang-Mills $\alpha$-connections converges to a Yang-Mills connection away from finitely many points. We prove an energy identity for such a sequence of Yang-Mills $\alpha$-connections. As an application, we also prove an energy identity for the Yang-Mills flow at the maximal existence time.
\end{abstract}


\maketitle

\section{Introduction}

Let $M$ be a compact, connected, oriented four-dimensional Riemannian manifold and $E$ a vector bundle over $M$ with structure group $G$, where $G$ is a compact Lie group compatible with the natural inner product on
$E$.
  For each connection $A$, the Yang-Mills functional is defined by
 \begin{equation*}\mbox{YM}(A) = \int_M |F_A|^2 \,dV, \end{equation*}
where  $F_A$ is the curvature of $A$. A connection $A$ is a {\it Yang-Mills connection} if  it satisfies the Yang-Mills equation
 \begin{equation}\label{1.1}D_A^*F_A =0\,.  \end{equation}

 There have been many significant results on the existence of smooth Yang-Mills connections on 4-manifolds. In a pioneering work,
 Atiyah,  Hitchin, Drinfel'd   and   Manin \cite {AHDM} constructed   self-dual Yang-Mills connections on $S^4$.     Taubes \cite
{T1} established the existence of self-dual (anti-self-dual) connections over compact $4$-manifolds with positive (negative) definite intersection form.  In \cite{AB}, Atiyah and Bott established the fundamental result of Morse theory for the Yang-Mills equations on Riemann surfaces. Since the Yang-Mills functional does not satisfy the Palais-Smale
condition in dimension four, Taubes \cite {T2} suggested a new framework for Morse theory for the Yang-Mills functional. However, it seems very challenging to find many applications of the new theory. Using $m$-equivariant connections, Sibner, Sibner and Uhlenbeck \cite{SSU} proved the existence of non-self-dual Yang-Mills connections on the trivial $SU(2)$ bundle over $S^4$. There are further results (\cite {Parker1}, \cite {Gri}) about Morse theory for  the equivariant Yang-Mills functional on $4$-manifolds.

On the other hand,  the theory of Yang-Mills connections in dimension four has many similarities with the theory of harmonic maps in dimension two. For maps $u: \tilde M \to N$ from a two-dimensional manifold $\tilde M$ to another manifold $N$, the energy functional $E(u)=\int|\nabla u|^2$  also fails to satisfy the Palais-Smale condition. For this reason, Sacks and Uhlenbeck \cite {SacksUhlenbeck} introduced an $\alpha$-functional $E_\alpha=\int (1+|\nabla u|^2)^{\alpha}$, where $\alpha >1$, to prove the existence of harmonic maps on surfaces. This new approach via the Sacks-Uhlenbeck functional $E_\alpha$ is very powerful and has produced tremendous applications in harmonic maps and related topics. Specifically, the Sack-Uhlenbeck approach is described as follows. Let $u_{\alpha}: \tilde M \to N$ be a sequence of maps, where $u_\alpha$ is a critical point of the functional $E_\alpha$ and $\alpha
>1$. Sacks and Uhlenbeck proved that for a sequence with $\alpha\to 1$ and having uniformly bounded energies, the sequence $\{u_{\alpha}\}$
converges smoothly to a harmonic map $u_{\infty}$  away from at most finitely many points. Moreover, a sophisticated blow-up phenomenon occurs around singularities.

It is natural to ask whether energy is conserved in the blow-up process. Parker \cite{Parker} established  the energy identity for a sequence of smooth harmonic maps, and Ding and Tian \cite {DingTian} proved an energy identity for a sequence of smooth approximate harmonic maps.  It was a long-standing open question to establish the energy identity for the above sequence of critical points $\{u_{\alpha}\}$ of the Sacks-Uhlenbeck functional $E_{\alpha}$. That is, given that the energies are uniformly bounded as $\alpha\to 1$, do there exist harmonic maps $\omega_i: S^2\to N$ with $i=1,\cdots,l$   such that
  \begin{equation}\label{eqn:identity}
      \lim_{\alpha \to 1} E(u_{\alpha})=E(u_{\infty}) +\sum_{i=1}^l E(\omega_i)?
  \end{equation}
For this question,  Li and Wang \cite{LiWang} established the weak identity for the limit $\lim_{\alpha \to 1} E_{\alpha}(u_{\alpha})$. Under an additional assumption,  Lamm  \cite{Lammidentity} and Li and Wang \cite{LiWang} proved that the  energy identity (\ref {eqn:identity}) holds. However, in a recent preprint \cite{LiW}, Li and Wang constructed a sequence of critical points $u_{\a}$ which provides a counterexample and shows that (\ref{eqn:identity}) is not true.

Following the above Sacks-Uhlenbeck approach,  Hong, Tian and Yin \cite{HTY} introduced the Yang-Mills $\alpha$-functional   to establish the existence of Yang-Mills connections. For
$\alpha > 1$, the Yang-Mills $\alpha$-functional $YM_{\alpha}$ is defined by \begin{equation} \label{functional} YM_{\alpha}(A) =\int_M {(1+|F_A|^2)^{\alpha}dV}. \end{equation}

In several respects, the Yang-Mills $\alpha$-functional has nicer properties than the Yang-Mills functional.
   It was shown in \cite{HTY} that the Yang-Mills $\alpha$-flow associated to (\ref{functional}) admits a global smooth solution, whose limit set contains a smooth critical point
   of the Yang-Mills $\alpha$-functional. By considering the limit $\alpha \to 1$, the authors were then able to obtain existence results for Yang-Mills connections and the Yang-Mills flow.
   Furthermore,  Sedlacek \cite{Sedlacek.direct} applied the weak compactness result of Uhlenbeck \cite
{U2} to prove that a minimizing sequence $A_{i}$ of the Yang-Mills functional converges weakly in $W^{1,2}(M\backslash \{x_1, ..., x_l\})$ to a limiting connection $A_{\infty}$, which can be extended to a Yang-Mills connection  in a (possibly) new bundle $E'$  over $M$.  Hong, Tian and Yin \cite {HTY} used the Yang-Mills $\alpha$-flow to find a modified minimizing sequence, which converges
to the same limit in the smooth topology up to gauge transformations away from finitely many singular points, which  improved the result in \cite{Sedlacek.direct}.  Moreover, for the modified
minimizing sequence, an energy identity was proved.

In this paper, we continue the program of the Yang-Mills $\alpha$-functional and study the critical points of the Yang-Mills $\alpha$-functional directly. First, we show

\begin{Theorem1} \label {Theorem1} The functional $YM_{\alpha}$ satisfies the Palais-Smale condition for every $\alpha >1$. \end{Theorem1}

The  Palais-Smale condition (e.g.
\cite{Palais.Foundations}, \cite  {Struwe}, \cite {Lammidentity}) guarantees the existence of a critical point $A_\alpha$ of $YM_\alpha$; i.e. $A_{\alpha}$ satisfies \begin{equation}
\label{eulerlagrange1} D^*_A \left( (1+|F_A|^2)^{\alpha-1}F_A \right) = 0 \end{equation} in the weak sense. In fact, the smoothness of the Yang-Mills $\alpha$-connection $A_\alpha$ is essentially due to Isobe in \cite{Isobe}, and we prove it in the Appendix. We call $A_{\alpha}$ a Yang-Mills $\alpha$-connection if it satisfies (\ref{eulerlagrange1}).

In analogy with the above Sacks-Uhlenbeck program, we consider attempting to construct a Yang-Mills connection as the limit as $\alpha \to 1$ of a sequence $A_\alpha$ of critical points
of $YM_\alpha$. In fact, it was proved in \cite{HTY}  that for a sequence $\alpha_k\to 1$, the sequence $\{A_{\alpha_k}\}$ sub-converges smoothly, up to gauge transformations, to a
Yang-Mills connection $A_{\infty}$ away from at most finitely many points. A natural question to ask is whether energy is lost during the blowup process. Fortunately, we are able to prove the
following energy identity:

\begin{Theorem2} \label{maintheorem} Let  $A_\alpha$ be a sequence of Yang-Mills $\alpha$-connections on $E$ with $\alpha\to 1$, and $YM(A_{\alpha})$ uniformly bounded. Then there exists a finite set $S\subset M$ and a sequence of gauge transformations $\phi_{\alpha}$ defined on $M\setminus S$, such that for any compact $K\subset M\setminus S$, a subsequence of $\phi_\alpha^*A_{\alpha}$ converges to $A_\infty$ smoothly in $K$. Moreover, there are a finite number of bubble bundles $E_1,\cdots,E_l$ over $S^4$ and Yang-Mills connections $\tilde{A}_{1,\infty},\cdots,\tilde{A}_{l,\infty}$ such that \begin{equation} \label{energyidentity} \lim_{\alpha \to 1} YM(A_{\alpha}) = YM(A_\infty)  +
\sum_{i=1}^l YM(\tilde A_{i,\infty}). \end{equation}

\end{Theorem2} This result is surprising in comparison with those of Li-Wang \cite{LiWang}, \cite{LiW} and Lamm \cite{Lammidentity}, since we are able to prove the full energy identity for Yang-Mills $\alpha$-connections.
In fact,  Li-Wang \cite{LiWang} and   Lamm \cite{Lammidentity} used the  idea of  Sacks-Uhlenbeck \cite {SacksUhlenbeck} on removable singularities of harmonic maps in 2 dimensions to obtain
that \[\lim_{\delta\to 0}\lim_{R\to\infty}\lim_{\a\to 1}\int_{B_{\delta}\backslash B_{Rr_{\a} }(x_{\a})}|u_{\a, \theta}|^2\,dx =0,\] where $x_{\alpha}$ tends to a singularity $x_i$.

 But they had to handle the term
\[\lim_{\delta\to 0}\lim_{R\to\infty}\lim_{\a\to 1} \int_{B_{\delta}\backslash B_{Rr_{\a} }(x_{\a})}|\frac {\partial u_{\a}}{\partial r} |^2d x\] by using a type of Pohozaev's identity which involves a very `bad' term, so they could not prove the energy identity. In contrast, the idea of Uhlenbeck \cite{Uhl.removable} on removable singularities of Yang-Mills connections in 4 dimensions is to
construct a broken Hodge gauge, which is different from one in \cite {SacksUhlenbeck}.   We can apply the idea of the broken Hodge gauge  to show that \[ \lim_{R \to \infty}
\lim_{\delta \to 0} \lim_{\alpha \to 1} \int_{B_\delta \setminus B_{Rr_\alpha}(x_{\a})}{|F_{A_\alpha}|^2 dV} = 0. \] See Lemma 3.7.

Tian \cite{Tian.calibrated} first established the energy identity of a sequence of Yang-Mills $\Omega$-self-dual connections. Rivi\`{e}re \cite{Riviere} proved the energy identity for a
sequence of smooth Yang-Mills connections in 4 dimensions.  As a consequence of Theorem \ref{maintheorem}, we can give a different proof of this result (see Corollary \ref{corol}). As another application, we are also able to give a new proof of the energy identity results of \cite {HTY} (see Proposition \ref{prop}).

It is also possible to consider whether an energy identity holds at the blowup time for the corresponding heat flows. Struwe \cite {St1} established the global existence of the harmonic map flow in 2 dimensions.  Ding-Tian \cite {DingTian} (see also \cite{LW}) established the energy identity for the harmonic map flow at the  maximal existence time.  Motivated these results,  we can  prove a similar result for  the Yang-Mills flow in 4 dimensions.  In fact, Struwe \cite{StruweYMheat} established the global existence of weak solutions of the Yang-Mills flow
\begin{equation}\label{YMHF}\frac {\partial A}{\partial
t}=-D_A^*F_A\quad  \mbox{in  }M\times [0, \infty)\end{equation}
 with initial value $A(0)=A_0$.
Struwe proved that   the weak solution $A(t)$ of the Yang-Mills flow  is, up-to gauge transformations, smooth in $M\times (0,T)$ for some maximal time $T>0$. Moreover, as $t \to T$ the solution $A(t)$ converges, up-to gauge transformations, to a connection $A(T)$, smoothly away from at most finitely many points. Schlatter \cite{AS}  gave the details for the blow-up analysis of  the Yang-Mills flow at the singular time $T$, but there is no result concerning the energy identity of  the Yang-Mills flow at the time $T$. We can  apply  the proof of Theorem \ref{maintheorem} to establish an  energy identity for the Yang-Mills flow as follows:

\begin{Theorem3} \label{thm3}Let $A(t)$ be a solution to the Yang-Mills flow (\ref {YMHF}) in $M\times[0,T)$, where $T\in (0, \infty ]$ is the maximal existence time, and $A(t)$ converges
weakly as $t \to T$ to a connection $A(T)$. Then there are a finite number of bubble bundles $E_1,\cdots,E_l$ over $S^4$ and Yang-Mills connections
$\tilde{A}_{1,\infty},\cdots,\tilde{A}_{l,\infty}$ such that \begin{equation*} \lim_{t \to T} YM(A(t)) = YM(A(T)) + \sum_{i=1}^l YM(\tilde{A}_{i,\infty}). \end{equation*} \end{Theorem3} In
particular, if $T=\infty$,  Theorem \ref{thm3} implies  a new proof of an identity for a sequence of the second Chern numbers  of holomorphic vector bundles over K\"ahler surfaces (Theorem
11 of \cite {HT}, Theorem 4 of \cite{DW}).

The paper is organised as follows. In section 2, we recall some necessary background and prove Theorem 1. In section 3, we prove the energy identities Theorem 2 and Theorem 3. Finally, in the Appendix, we show the regularity of weak solutions to the Euler-Lagrange equations.

\section{Preliminaries and the Palais-Smale condition}

We begin by recalling background and introducing the notation we will require. Denote by $\mathcal{A}$ the affine space of metric connections on $E$. Let $A \in \mathcal{A}$ be a
metric connection. After fixing a reference connection $D_0 =D_{A_0}$, we write $D_A = D_0 + A$ where $A \in \Gamma(\operatorname{Ad} E \otimes T^*M)$. The curvature of $A$ is $F_A
\in \Gamma(\operatorname{Ad} E \otimes \Lambda^2 T^*M)$. Here we denote by $\operatorname{Ad} E$ the adjoint bundle which has fibre $\mathfrak{g}$, the Lie algebra of $G$.

The inner products on a pair of vector bundles induce an inner product on their tensor product. In the case of $\operatorname{Ad} E \subset \operatorname{End} E = E \otimes E^*$, this
is just the negative of the Killing form. This combines with the inner product on $T^*M$ to define an inner product on $\operatorname{Ad} E \otimes^k T^*M$. Similarly, the connections
on a pair of vector bundles induce a connection on their tensor product. Thus we obtain a connection on $\operatorname{Ad} E \subset \operatorname{End} E$ and, using the Levi-Civita
connection, a connection on $\operatorname{Ad} E \otimes^k T^*M$. The Levi-Civita connection also allows us to define multiple iterations of the covariant derivative on any vector
bundle.

We denote by $D_A$ both the exterior covariant derivative on $\Gamma(E \otimes \Omega^p(M))$, and the exterior covariant derivative on $\Gamma(\operatorname{End} E \otimes
\Omega^p(M))$. The functional $YM_{\alpha}$ has a Fr\'{e}chet derivative
\begin{align} \label{derivexpress} dYM_{\alpha, A}(B) & = \left. \frac{d}{d t} \right|_{t  = 0} YM_{\alpha}(A + tB) =
\int_M {2 \alpha \left( 1 + |F_A|^2 \right)^{\alpha - 1}\left\langle F_A,D_A B\right\rangle dV}
 \\ & = 2 \alpha  \int_M \left\langle D_A^*(1+|F_A|^2)^{\alpha -1}F_A),  B\right\rangle dV,  \nonumber\end{align}
 so
 the Euler-Lagrange equation for $YM_{\alpha}$ is
(\ref {eulerlagrange1}). Note that solutions to (\ref {eulerlagrange1}) are smooth up to gauge by the results of the appendix. Then since $(1+|F_A|^2)^{\alpha-1}$ is bounded below,
this equation is equivalent to \begin{equation} \label{eulerlagrange2} D^*_AF_A - (\alpha-1)\frac{*(d|F_A|^2\wedge*F_A)}{1+|F_A|^2}=0. \end{equation}

The functional $YM_{\alpha}$ is invariant under the action of the gauge group $\mathcal  G$, which consists of those automorphisms of $E$ which preserve the inner product. A gauge
transformation is thus a section of the bundle $\operatorname{Aut} E$, which has fibre $G$. The gauge group acts on $\mathcal{A}$ by \[ s^*D_A = s^{-1} \circ D_A \circ s. \] Locally, we write $D_A=d+A$.
$s^*D_A = d + s^*A$, we find \begin{equation} \label{gaugetransform} s^*A = s^{-1}d s + s^{-1} A s. \end{equation}
The functional $YM_{\alpha}$  is gauge invariant, so it is more appropriately viewed as a functional \[ YM_{\alpha}: \mathcal{A} / \mathcal G \to \mathbb{R}, \] where $\mathcal{A} / \mathcal G$ is a space with gauge equivalence and
imbued with the quotient topology, but not a smooth manifold.

We first must topologise $\mathcal{A}$ by a suitable choice of norm. Given a section $u$ of a vector bundle $V$ with a smooth connection $D_0=D_{A_0}$, we define the $W^{k,p}$
Sobolev norm by \[ \|u\|_{W^{k,p}} = \left( \displaystyle\sum_{|\alpha| =0}^{k} \int_M{|\nabla_{A_0}^{\alpha}u|^p}\right)^{1/p}. \] Different choices of reference connection lead to
equivalent norms. We then define the Sobolev space $W^{k,p}(V)$ to be the completion of the smooth sections of $V$ by the $W^{k,p}$ norm.

We may define Sobolev norms on the space of connections using the vector bundle $\operatorname{Ad} E \otimes T^*M$. A different choice of reference connection taken as the origin of
the affine space $\mathcal{A}$ also leads to an equivalent norm. We denote by $\mathcal{A}^{1,p} = W^{1,p}(\mathcal{A})$ the space of connections of class $W^{1,p}$.

Since a gauge transformation $s$ can be expressed locally over $U \subset M$ as a map $s:U \to G$, we will say that a gauge transformation is of class $W^{k,p}$
if its local restrictions are of class $W^{k,p}$, and $s(x) \in G$ almost everywhere. Due to the derivative of the gauge transformation appearing in (\ref{gaugetransform}), we need to
control two derivatives of the gauge transformation, so we consider $\mathcal G^{2,p}$. We will choose $p = 2 \alpha$, and consider the space $\mathcal{A}^{1,2 \alpha}$ and its
quotient $\mathcal{A}^{1,2 \alpha} / \mathcal G^{2,2 \alpha}$.

Now, we prove that the Yang-Mills $\alpha$-functional satisfies the Palais-Smale condition on the quotient space.

 \begin {proof} [Proof of Theorem 1] We say that a sequence $A_n \in \mathcal{A}^{1,2 \alpha}/ \mathcal G^{2,2 \alpha}$ is a Palais-Smale sequence if it satisfies
\begin{align*} \text{i) } & YM_{\alpha}(A_n) \leq C, \\ \text{ii) } & \|d YM_{\alpha, A_n}\|_{W_{A_n}^{-1, q}} \to 0, \end{align*}
where $q$ is the dual number of $p=2\alpha$; i.e. $\frac 1 p+\frac 1 q=1$.
Similarly to one in \cite {SSU}, the norm $\|\cdot\|_{W_{A_n}^{-1, q}}$ here is the dual norm  of $W_{A_n}^{1,2 \alpha}$ defined by
\[\|d YM_{\alpha, A}\|_{W_A^{-1, q}}=\frac {\mbox{max} \frac {d YM_{\alpha}(A+\varepsilon \delta A)}{d\varepsilon  }|_{\varepsilon =0}}{\|\delta A\|_{W_A^{1,2\alpha}}},\]
where
\[\|\delta A\|^{2\alpha}_{W_A^{1,2\alpha}}=\int_M |\nabla_A \delta A|^{2\alpha}+|\delta A|^{2\alpha}\,dV.\]
We say that the functional (\ref{functional}) satisfies the Palais-Smale condition if every Palais-Smale sequence contains a subsequence that converges in $W^{1,2\alpha}$ up to gauge transformations.
That is, passing to a subsequence, $s_n^* A_n$ converges for some sequence of gauge transformations $s_n \in \mathcal G^{2,p}$. Note that this condition implies Condition C of
\cite{PalaisMorseHilbert}.

First we observe that the $L^2$ norm of the curvature cannot concentrate, since by H\"older's inequality, \[ \int_{B_r}{|F_{A_n}|^2dV} \leq \left( \int_{B_r}{|F_{A_n}|^{2\alpha}dV} \right)^{\frac{1}{\alpha}}
|B_r|^{\frac{\alpha-1}{\alpha}} \to 0 \] as $r \to 0$. For a sufficiently small ball $U=B_r$,  the bundle $E$ can be trivialized so that we can assume $D_{A_n}=d+A_n$ on $U$. Then
 we apply the gauge-fixing theorem of Uhlenbeck (Theorem 1.3 of \cite{U2}) to obtain that there are  local gauge transformations
$s_n$ defined on $U$ such that
 \[\left. d^*(s_n^*A_n) \right |_U=0,\quad s_n^*A_n\cdot \nu|_{\partial U} =0 \]
 and
 \[ \|s_n^*A_n\|_{W^{1,2 \alpha}(U)}\leq C\|F_{A_n} \|_{L^{2 \alpha}(U)}.\]
 Since this result is local, we must do this in each element $U_j$ of a cover of $M$, and check that in the limit the local connections can be
sewn together to yield a global connection. In particular, one must check that the transition functions between elements of the cover are converging. This procedure is described by
Sedlacek \cite{Sedlacek.direct}.

Since $M$ is compact, there is  a finite open cover $\{U_j\}_{j=1}^L$ of $M$, where $U_j=B_r(x_j)$ for some sufficiently small $r>0$, and at each $x\in M$, at most a finite number of the balls intersect.
From the boundedness of $YM_{\alpha}(A_n)$, we have that $F_{A_n}$ is bounded in $W^{1,2\alpha}$. For simplicity,  we define $\bar A_{n,j} = s_{n}^*A_n$ on each ball $U_j$. Then $d+\bar A_{n,j}$ can be regarded as a local representative of  the  global connection $D_{A_n}$ of $E$ over $U_j$. Moreover, in the overlap $U_i\cap U_j$ of two balls,  $\bar A_{n,i}$ and $\bar A_{n,j}$ can be identified as the same by a gauge transformation $s_{ij}\in \mathcal G^{2,2 \alpha}$ between  $E_{U_i}$ and $E_{U_j}$ (see Lemma 3.5 of \cite{U2}, also \cite{Sedlacek.direct}).  Thus we can glue $\bar A_j$ together to obtain a global connection $D_{\bar A}$, which is $\mathcal G^{2,2 \alpha}$-gauge equivalent to $D_{A_n}$  in the local trivialization of $E_{U_j}$. Passing to a subsequence, it follows
from the Rellich-Kondrachov Theorem that $\bar A_{n,j}$ converges strongly in $L^{2\a}(U_j)$, implying that  $\bar A_{n,j}$ converges  in $C^0$ and are uniformly bounded.

Since $YM_\alpha(\bar A_n; M)$ and $\|\bar A_{n, j} - \bar A_{m, j}\|_{W^{1,2\alpha}(U_j)}$ are bounded, $\|\bar A_{n, j} - \bar A_{m, j}\|_{W_{\bar A_n}^{1,2\alpha}(U_j)}$ is also bounded. Then we also have
\begin{equation} \label{derivativetozero} |dYM_{\alpha,
\bar A_n}(\bar A_n - \bar A_m)|\leq \|d YM_{\alpha,\bar A_n}\|_{W_{\bar A_n}^{-1,q}} \,  \sum_{j=1}^L \|\bar A_{n, j} - \bar A_{m, j}\|_{W_{\bar A_n}^{1,2\alpha}(U_j)}  \to 0. \end{equation}
Using H\"older's inequality and the fact that $\|\bar A_{n, j} - \bar A_{m, j}\|_{L^{4 \alpha}(U_j)} \to 0$, we  find
\begin{align*}  & \int_{U_j}\left( 1 + |F_{\bar A_n}|^2 \right)^{\alpha - 1}|F_{\bar A_n}| \,|\bar A_{n,j}|\, |\bar A_{n,j} - \bar A_{m,j}|\,
dV \\ \leq  & \left (\int_{U_j} \left( 1 + |F_{\bar A_n}|^2 \right)^{\alpha }dV\right )^{\frac{2\alpha-1}{2\alpha}}
\left (\int_{U_j}\,|\bar A_{n, j}|^{2\alpha}\, |\bar A_{n, j} - \bar A_{n, j}|^{2\alpha}\,dV\right )^{\frac 1 {2\alpha}}
\\ &\to 0, \quad \mbox {as }m, n\to \infty. \end{align*}
Denote by $o(1)$ terms which are going to zero as $n,m \to \infty$. Then
\begin{align*}
dYM_{\alpha,  \bar A_n}(\bar A_n - \bar A_m)& = \int_M  2 \alpha\left <D^*_{\bar A_n}[\left( 1 + |F_{\bar A_n}|^2 \right)^{\alpha - 1} F_{\bar A_n}], (\bar A_n - \bar A_m)\right >
dV \\ = & \int_M 2 \alpha \left( 1 + |F_{\bar A_n}|^2 \right)^{\alpha - 1}\left< F_{\bar A_n},D_{\bar A_n} (\bar A_n - \bar A_m)\right >
dV \\ = & \int_M 2 \alpha \left( 1 + |F_{\bar A_n}|^2 \right)^{\alpha - 1}\left\langle F_{\bar A_n},D_{A_0}(\bar A_n - \bar A_m)\right\rangle dV\\
& + \int_M 2 \alpha \left( 1 + |F_{\bar A_n}|^2 \right)^{\alpha - 1}\left\langle F_{\bar A_n},\bar A_n\#(\bar A_n - \bar A_m)\right\rangle dV \\ =
& \int_M 2 \alpha \left( 1 +
|F_{\bar A_n}|^2 \right)^{\alpha - 1}\left\langle F_{\bar A_n},F_{\bar A_n}-F_{\bar A_m})\right\rangle dV +o(1), \end{align*} since $A \wedge A - B \wedge B = A \wedge(A-B) - (B - A)
\wedge B$.

  It is well-known that  there is a constant $c>0$  such that
\[ \left\langle ( 1 + |b|^2)^{\alpha - 1}b-( 1 + |a|^2)^{\alpha - 1}a, b-a \right\rangle \geq c|b-a|^{\alpha} \]  for any two constants $a,b \in \mathbb{R}^k$. Using this inequality, we obtain
\begin{align*} & (dYM_{\alpha,\bar A_n} - dYM_{\alpha, \bar A_m})(\bar A_n - \bar A_m) \\ = & \int_M {2 \alpha \left\langle  \left( 1 + |F_{\bar A_n}|^2 \right)^{\alpha - 1} F_{\bar A_n}
- \left( 1 + |F_{\bar A_m}|^2 \right)^{\alpha - 1} F_{\bar A_m},F_{\bar A_n}-F_{\bar A_m})\right\rangle dV} +o(1) \\ \geq & 2\alpha c\int_M{|F_{\bar A_n}-F_{\bar A_m}|^{2 \alpha}dV} + o(1) \\
\geq & 2\alpha c\int_{U_j}{|F_{\bar A_{n,j}}-F_{\bar A_{m,j}}|^{2 \alpha}dV} + o(1). \end{align*}
Note that $F_{\bar A_{n,j}}=d\bar A_{n,j}+[\bar A_{n,j},\bar A_{n,j}]$ and  $F_{\bar A_{m,j}}=d\bar A_{m,j}+[\bar A_{m,j},\bar A_{m,j}]$ in $U_j$. Then
\[\int_{U_j} |d(\bar A_{n,j} - \bar A_{m,j})|^{2\alpha}\,dV=o(1).\]
 Since $d^*(\bar A_{n,j} - \bar A_{m,j})=0$    in
$U_j$ and $(\bar A_{n,j} - \bar A_{m,j})\cdot\nu =0$ on  $\partial U_j$,    it follows from Lemma 2.5 of \cite {U2} that
 \[\int_{U_j} |\nabla (\bar A_{n,j} - \bar A_{m,j})|^{2 \alpha}\,dV\leq C \int_{U_j} |d(\bar A_{n,j} - \bar A_{m,j})|^{2\alpha}\,dV\to 0. \] Thus $\bar A_{n,j}=s_n^* A_n$ is converging strongly in $W^{1,2\alpha}(U_j)$ as required. After sewing
together the patches $U_j$ in the covering, we get a globally defined connection $A_\infty$. For any smooth $B$ with support in $U_i$, we have
\begin{align}  dYM_{\alpha, A_n}(B)  =
\int_M {2 \alpha \left( 1 + |F_{A_n}|^2 \right)^{\alpha - 1}\left\langle F_{A_n},D_A B\right\rangle dV}
\end{align}  Since  $\|d
YM_{\alpha ,A_n}\|_{W_{A_n}^{-1, q}} \to 0$, the strong convergence of $A_{n}$ in $W^{1, 2\a}$ implies that
\begin{align}
\int_M { \left( 1 + |F_{A_{\infty}}|^2 \right)^{\alpha - 1}\left\langle F_{A_{\infty}},D_A B\right\rangle dV}=0.
\end{align} Therefore  $A_{\infty}$ is a critical point of  $YM_\alpha$.
 This completes a proof of Theorem 1. \end{proof}

The Palais-Smale condition implies the existence of a critical point. In particular, given a min-max sequence of $YM_{\alpha}$ (e.g. \cite{Struwe}, \cite{Lammidentity}, \cite{SSU}), the Palais-Smale condition  then ensures, for each
$\alpha > 1$, the existence of a critical point $A_{\alpha}$ of $YM_{\alpha}$. Furthermore, we have the following remark.

\vspace{3mm} \noindent {\bf Remark} \emph{ By using an idea of Struwe \cite{Struwe} (see also Lamm \cite{Lammidentity}), one expects that there exists a sequence of critical points
$A_{\alpha}$ of the Yang-Mills $\alpha$-functional with $\alpha \to 1$ such that} \begin{equation} \liminf_{\alpha \to 1} (\alpha - 1) \int_M
{\log(1+|F_{A_\alpha}|^2)(1+|F_{A_\alpha}|^2)^\alpha dV} =0. \end{equation} However, we do not require this result for this paper, so we will not give a proof here.

\section{Energy Identity} In this section, we establish the energy identity for a sequence of Yang-Mills $\alpha$-connections (Theorem  \ref{maintheorem}), and the energy identity for the Yang-Mills flow at the maximal existence time (Theorem \ref{thm3}).

Consider now a sequence $A_{\alpha}$ of connections, where $A_{\alpha}$ is a smooth critical point of $YM_{\alpha}$, and $\alpha \to 1$. We suppose that they have uniformly bounded energy
$YM(A_{\alpha}) \leq K$ for some constant $K$. We now study the limit of the sequence as $\alpha \to 1$. In fact,  it was shown in  Lemmas 3.5-3.6 of \cite{HTY} that a subsequence of $A_\alpha$,
up to gauge transformations, converges smoothly  to a connection $A_{\infty}$ away from finitely many points $x_i \in M$, $1 \leq i \leq l$. Moreover, there is a constant $\varepsilon_0>0$ such that  the singular points $\{x_i\}$ are defined by the condition
\[ \limsup_{\alpha \to 1} YM(A_\alpha; B_R(x_i)) \geq \varepsilon_0 \] for any $R\in (0, R_0]$, with some fixed $R_0>0$.

We now consider the energy identity (\ref{energyidentity}) for the above sequence of Yang-Mills $\alpha$-connections $A_{\alpha}$. We fix an energy concentration (singular) point $x_i$,
and choose $R_0$ so that $B_{R_0}(x_i)$ contains no concentration points other than $x_i$.

 In order to establish the energy identity, we recall the  removable singularity theorem of Uhlenbeck \cite {U1} and the gap theorem of  Bourguignon and Lawson \cite{BL}: There is a constant $\varepsilon_{g}>0$ such that if $A$ is a Yang-Mills connection on $S^4$ satisfying $\int_{S^4}|F_A|^2<\varepsilon_{g}$, then $A$ is flat; i.e. $F_A=0$ on $S^4$.

We also have:

    \begin{Lemma1} ($\varepsilon$-regularity estimate) \label{epsilonregularity}
        \label{lem:mainlemma}  There exists $\varepsilon_0>0$ such that if $A_\alpha$ is a smooth Yang-Mills $\alpha$-connection satisfying
    \begin{equation*}
     \int_{B_{R}(x_0)} |F_{A_\alpha}|^2 dV< \varepsilon_0,
    \end{equation*}
    then we have
  \begin{equation*}
    \sup_{B_{R/2}(x_0)} |F_{A_\alpha}|\leq \frac{C}{ R^2} \left(\int_{B_R(x_0)} |F_{A_\alpha}|^2 dV\right)^{1/2}.
  \end{equation*}
 \end{Lemma1}
 \begin {proof} Without loss of generality, we can assume $R=1$.
Using Lemma 3.5 of \cite{HTY},   $F_{A_\alpha}$ is bounded in $B_{3/4}(x_0)$ by a constant $C$.
Recalled that each   $\alpha$-connection $A_{\alpha}$ has  the  Bochner tye formula  (see Lemma 3.2 of \cite{HTY}):
For $\alpha-1$  sufficiently small, there is a constant $C$ such that
  \begin{eqnarray} \label{eqn:firstbochner}
  - \nabla_{e_i} \left( (\delta_{ij} +2(\alpha-1)\frac{
   \left <F_{lj},F_{li}\right >}{1+\abs{F}^2}) \nabla_{e_j} \abs{F}^2\right) \leq C\abs{F}^2 (1+\abs{F}),\nonumber
  \end{eqnarray}
  where $F=F_{A_\alpha}$. Then applying a variant of the Moser-Harnack estimate, we have
  \begin{equation*}
   \sup_{B_{1/2}(x_0)} |F_{A_\alpha}|\leq C \left(\int_{B_1(x_0)} |F_{A_\alpha}|^2 dV\right)^{1/2}.
  \end{equation*}
  \end{proof}

\subsection{Bubble-neck decomposition}

It is well-known that the  bubble-neck decomposition holds for a sequence of smooth harmonic maps.
There are  two kind of   methods on constructing the bubble tree and neck decomposition  in harmonic maps, one present by Parker  \cite {Parker} and another one   by Ding-Tian  \cite{DingTian}.
In fact, Parker  \cite {Parker} pointed out that a bubble procedure is also true for Yang-Mills connections (without  details).     Rivi\`{e}re \cite {Riviere} pointed out that using the idea of Ding-Tian \cite{DingTian},  multiple bubbles  can be simplified to a single bubble for a sequence of Yang-Mills connections.

In order to make  proofs completely,
 we here give detailed proof on constructing the  bubble-neck decomposition  for a sequence of Yang-Mills $\alpha$-connections by following the idea of Ding-Tian \cite{DingTian}. Some details are  similar to  those in the Appendix of \cite {LiW}. The bubble tree procedure is divided into three steps.

\medskip\noindent{\bf Step 1.} To find a maximal (top) bubble at the level one  (first re-scaling).

After passing to a subsequence we know  that $A_\alpha \to A_{\infty}$, up-to gauge transformations, smoothly in $B_{R_0}$ away from $x_i$, so we have
\begin{equation}\label{local} YM(A_\alpha; B_{R_0} \setminus B_{\delta}
(x_i)) \to YM(A_\infty; B_{R_0} \setminus B_{\delta} (x_i))
\end{equation} for any $\delta\in (0, R_0)$,  where $A_{\infty}$ is a Yang-Mills connection and the singularity of $A_{\infty}$ can be removed (\cite  {Uhl.removable}).

Since $x_i$ is a concentration point, we find sequences $x^1_\alpha \to x_i$  such that
\[|F_{A_{\a}}(x^1_{\a})|=\max_{B_{R_0}(x_i)} |F_{A_{\a}}|,\quad  r_\alpha^1 =\frac 1 {|F_{A_{\a}}(x^1_{\a})|^{1/2}}\to 0. \]
In  the neighborhood of the singularity $x_i$,  $D_{A_{\alpha}}=d+A_{\alpha}$ with $A_{\alpha}=A_{\alpha, k}(x)dx^k$. Then we define the rescaled connection
\[ D_{\tilde A_{\alpha}}(x)=d
+\tilde A_{\alpha}(x):= d + r^1_{\alpha}A_{\alpha, k}(x^1_{\alpha}+r^1_{\alpha}x)dx^k.\]
The connection $\tilde A_{\alpha}=A_{\alpha}(x^1_{\alpha}+r^1_{\alpha}x)$ satisfies
\begin{equation}\label{alpha-B}
 -D_{\tilde A_{\alpha}}^* F_{\tilde A_{\alpha}}+(\alpha-1)\frac{*(d\abs{F_{\tilde A_{\alpha}}}^2\wedge *F_{\tilde A_{\alpha}})}{(r^1_{\alpha})^4+\abs{F_{\tilde A_{\alpha}}}^2}=0.
\end{equation}
Since $\|F_{\tilde A_{\alpha}}\|_{L^{\infty}}=1$.
Using Lemmas 3.6-3.7 of \cite{HTY} again, $\tilde A_{\alpha}$ sub-converges smoothly, up-to gauge transformations, to $\tilde A_{1,\infty}$ locally in $\R^4$  as $\alpha\to 1$, and $\tilde A_{1,\infty}$ can be extended to a connection on $S^4$ (see \cite {Uhl.removable}) and nontrivial. We call  $\tilde A_{1,\infty}$ to be the first bubble, which satisfies
\begin{equation}\label{First}YM(\tilde A_{1, \infty}; \R^4) =\lim_{R \to \infty}\lim_{\alpha \to 1}YM(\tilde A_\alpha; B_{R}(0))=
\lim_{R \to \infty}\lim_{\alpha \to 1}YM(A_\alpha; B_{Rr^1_\alpha}(x_\alpha)). \end{equation}

\medskip\noindent{\bf Step 2.} To find out new bubbles at the second level (second re-scaling).

 Assume that for a fixed small constant $\varepsilon>0$ (to be chosen later),  there exist two positive constants $\delta$ and   $R$ with $Rr^1_\alpha <4 \delta$ such that for all $\alpha$ sufficiently close to $1$, we have
\begin{equation}\label{small} \int_{B_{2r} \setminus
B_r (x^1_{\a})}{|F_{A_\alpha}|^2 dV} \leq \varepsilon \end{equation}
 for all $r \in (\frac{Rr^1_\alpha}{2},2\delta)$.

If  (\ref{small}) is   true,  it follows from (\ref{local})  and (\ref{First})  that
 \begin{align*}\label{neckenergy}
\lim_{\alpha\to 1} YM(A_{\alpha}; B_{R_0}(x_i))=&YM(A_{\infty}; B_{R_0}(x_i) )+  YM(\tilde A_{1,\infty}; \R^4)
 \\
&+ \lim_{R \to \infty} \lim_{\delta \to 0} \lim_{\alpha \to 1} YM(A_\alpha; B_\delta \setminus B_{Rr^1_\alpha}(x^1_\alpha)). \nonumber
\end{align*}
We can show (see below Lemma \ref{Removable}) that there is no missing energy in the neck region,
i.e. \begin{equation}\label{Key}
 \lim_{R \to \infty} \lim_{\delta \to 0} \lim_{\alpha \to 1} YM(A_\alpha; B_\delta \setminus B_{Rr^1_\alpha}(x^1_\alpha)) =0. \end{equation}
 Therefore, the energy identity follows. In this case, this means that there is only single bubble $\tilde A_{1, \infty}$ around $x_i$.

 If the assumption (\ref{small}) is not true, then
 for any two constants $R$ and  $\delta$ with $  Rr^1_\alpha< 4\delta$, there is a constant $r^2_{\alpha}\in  (\frac{Rr^1_\alpha}{2},2\delta)$  such that
 \begin{equation}\label{Fact}
  \lim_{\alpha \to 1}  \int_{B_{2r^2_{\alpha}}\backslash B_{r^2_{\alpha}}(x^1_{\alpha})} |F_{A_{\alpha}}|^2 >\varepsilon .\end{equation}
 If $
\lim_{\alpha\to 1}\frac {r^2_{\alpha}}{r^1_{\alpha}}\leq C $  for a finite constant $C$, it is not a problem since $\tilde A_{\alpha}$ converges smoothly, up-to gauge transformations, to $\tilde A_{1,\infty}$ locally in $\R^4$.
If $\lim_{\alpha\to 1}r^2_{\alpha}\neq 0$, it can be ruled out by choosing $\delta$ sufficiently small in (\ref{small}). Therefore, we can assume that $\lim_{\alpha\to 1}\frac {r^2_{\alpha}}{r^1_{\alpha}}=\infty $ and $\lim_{\alpha\to 1}r^2_{\alpha}=0$.

Since there might be many different constants $r^2_{\alpha}\in  (\frac{Rr^1_\alpha}{2},2\delta)$ satisfying (\ref{Fact}), we must classify these numbers.
  For any two constants  $r^2_{\alpha}$ and $\tilde r^2_{\alpha}$ in $(\frac{Rr^1_\alpha}{2},2\delta)$ satisfying (\ref{Fact}), they can be classified in different groups by the following properties:
 \begin{align}\label{group1}
 \lim_{\alpha \to 1} \frac {r^2_{\alpha}}{\tilde r^2_{\alpha}}=+\infty &\quad\mbox{or } \quad \lim_{\alpha \to 1} \frac {r^2_{\alpha}}{\tilde r^2_{\alpha}}=0;
  \\ \label{group2}
  0< a\leq \lim_{\alpha \to 1} \frac {r^2_{\alpha}}{\tilde r^2_{\alpha}}<b &\quad\mbox{or } \quad  0< a\leq \lim_{\alpha \to 1} \frac {\tilde r^2_{\alpha}}{r^2_{\alpha}}<b
  \end{align}
  for finite constants $a$ and $b$. We say that $r^2_{\alpha}$ and $\tilde r^2_{\alpha}$ are in the same group if they satisfy (\ref{group2}). Otherwise, they are in different groups if they satisfy (\ref{group1}).

  Since there is a uniformly bounded energy
$YM(A_{\alpha}) \leq K$ for some constant $K$ and $\varepsilon$ is a fixed constant, the above  different groups of $r^2_{\alpha}$ satisfying (\ref{Fact}) must be finite, so that we can choose a smallest  number $r^2_{\alpha}$ of different groups as a re-scalling scale, but $r^2_{\alpha}$ is not in the group of $r^1_{\alpha}$; i.e.
$\lim_{\alpha \to 1} \frac {r^2_{\alpha}}{r^1_{\alpha}}=\infty $ and $\lim_{\alpha\to 1}r^2_{\alpha}=0$. There might be many other numbers $\tilde r^2_{\alpha}$ satisfying   (\ref{Fact}) in the same group of $r^2_{\alpha}$. Because of (\ref{group2}), $\tilde r^2_{\alpha}/ r^2_{\alpha}$ are bounded as $\alpha\to 1$, but these numbers $\tilde r^2_{\alpha}$ can be ruled out by the following procedure.
Set
\[\tilde A_{2, \alpha}(x) =A_{\alpha} ( x^1_{\alpha}+ r^2_{\alpha}  x).\]
Passing to a subsequence,  $\tilde A_{2, \alpha}$ converges, up-to gauge transformations  away from a finite concentration set of $\{\tilde A_{2, \alpha}\}$,  to a Yang-Mills connection $\tilde A_{2,\infty}$ locally in $\R^4$. Those numbers $\tilde r^2_{\alpha}$ in the same group $r^2_{\alpha}$ have been handled out. If $\{\tilde A_{2, \alpha}\}$ is non-trivial,  then $\{\tilde A_{2, \alpha}\}$ is a new bubble, which is different from the bubble $\{\tilde A_{1, \alpha}\}$.
We must point out that the  above bubble connection  $\tilde A_{2,\infty}$ might be trivial, called a  `ghost bubble'. In this case, there is at least a concentration point $p\in B_2\backslash B_1$ of $\{\tilde A_{2, \alpha}\}$ due to (\ref{Fact}).

 At each concentration point $p$ of $\tilde A_{2, \alpha}$, we can repeat the procedure in Step 1; i.e.  at each concentration point $p$ of $\tilde A_{2, \alpha}$, there are sequences $x_{\alpha}^p\to p$ and $\lambda^p_{\alpha}\to 0$ such that
\[\tilde A_{2, \alpha}(x_{\alpha}^p+\lambda^p_{\alpha}x )\to \tilde A_{2,p,\infty}\] up-to gauge transformation,
where $\tilde A_{2,p,\infty}$ is a Yang-Mills connection on $\R^4$.
Note that $\tilde A_{2,p,\infty}$ is also a bubble for the sequence $\{A_{\alpha}(x^1_{\alpha}+r_{\alpha}^2x_{\alpha}^p+r_{\alpha}^2\lambda_{\alpha}^p x)\}$.

Set $x^{2,p}_{\alpha}=x^1_{\alpha}+r_{\alpha}^2x_{\alpha}^p$. If $p\neq 0$, then
\[
\frac {|x^1_{\alpha}-x^{2,p}_{\alpha}|}{r_{\alpha}^1}= \frac {r_{\alpha}^2} {r_{\alpha}^1}|x_{\alpha}^p| \to \infty  \mbox { as  }\alpha\to 1. \]
Therefore, the bubble $\{\tilde A_{2,p, \infty}\}$ at $p\neq 0$ is different from the bubble $\tilde A_{1,\infty}$.

Since $\lim_{\a\to 1}\frac {r^1_{\alpha}} {r^2_{\alpha}}=0$ and $\tilde A_{1,\infty}$ is  a bubble limiting connection for the sequence  $\{A_{\alpha} ( x^1_{\alpha}+ r^1_{\alpha}  x)=\tilde A_{2, \alpha}(\frac {r^1_{\alpha}} {r^2_{\alpha}}x)\}$,   $p=0$ is a concentration point of $\tilde A_{2, \alpha}$ on $\R^4$. Like Step 1, there is a small $R_2^0>0$ such that the ball $B_{R^2_0}(0)$ contains only the isolated concentration point $0$ of $\tilde A_{2, \alpha}$. Then,
  it can be seen from Step 1 that
\[ \lambda_{\alpha}^0= \frac 1 {\max_{B_{R^2_0}(0)} |F_{\tilde A_{2,\a}}|^{1/2}}\geq  \frac 1 {r^2_{\alpha}\max_{B_{R_0}(x_i)} |F_{A_{\a}}|^{1/2}} =\frac {r^1_{\alpha}} {r^2_{\alpha}}.\]
Similarly to Step 1, the bubble $\tilde A_{2,0,\infty}$  is chosen as the maximal (top) bubble for  $\tilde A_{2, \alpha}$ at $p=0$. Therefore
the bubble $\tilde A_{2,0,\infty}$  must be the same bubble $\tilde A_{1,\infty}$. We can keep it there without a problem.

Next, we must continue the above procedure for possible new multiple bubbles at each blow-up point $p$ again. Since there is a uniform bound $K$ for $YM(A_{\alpha}; M)$ and  each non-trivial bubble on $S^4$  costs at least $\varepsilon_g$ of  the energy by  the gap theorem of  Bourguignon and Lawson \cite{BL}, the above process must stop after finite steps.

\medskip\noindent{\bf Step 3.} To find out all  multiple bubbles.

 Let  $r^3_{\alpha}$ be in the second small group of numbers satisfying (\ref{Fact}) with $\lim_{\alpha \to 1} \frac {r^3_{\alpha}}{r^2_{\alpha}}=\infty$ and
$\lim_{\alpha \to 1}  r^3_{\alpha}=0$.
Set
\[\tilde A_{3, \alpha}(x) =A_{\alpha} (x^1_{\alpha} +r_{\alpha}^3x).\]
 Passing to a subsequence,  $\tilde A_{3, \alpha}$ converges, up-to gauge transformations  away from a finite concentration set of $\{\tilde A_{3, \alpha}\}$, locally to a Yang-Mills connection $\tilde A_{3,\infty}$  in a bundle over $\R^4$.
Then we can repeat the argument of Steps 1-2. All bubbles produced by $\tilde A_{3, \alpha}$, except for those concentrated in $0$,  are different from Steps 1-2.
By induction, we can find out all bubbles in all cases of the finite different groups. The above process must stop after finite steps  by  the gap theorem.

  In summary,  at each group level $k$, the blow-up happens,  there are  finitely many blow-up points  and Yang-Mills bubbles on $\R^4$.
At each level $k$ and each bubble point $p_{k,l}$, there are sequences $x_{\alpha}^{k,l}\to p_{k,l}$ and $r_{\alpha}^k\to 0$ such that passing to a subsequence,
$\tilde A_{\alpha, k,l}( x)=A_{\alpha} (r^{k}_{\alpha}  x+x^{k,l}_{\alpha})$ converges, up-to gauge transformations, to  $\tilde A_{k,l,\infty}$, where $\tilde A_{k,l,\infty}$  is a Yang-Mills connection in a bundle over $\R^4$.

\begin{figure}[h] 
     \begin{center}
        \subfigure{\includegraphics[width=5cm]{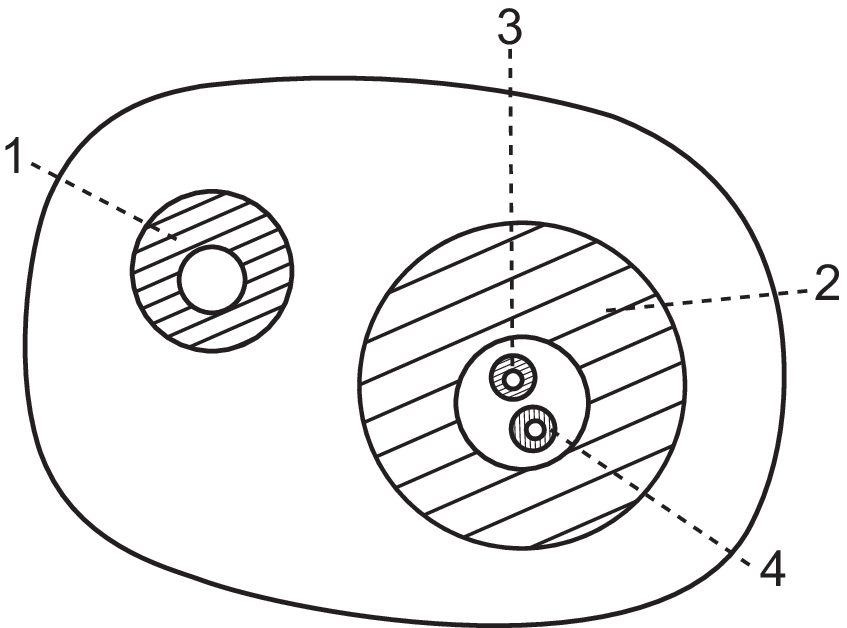}} \qquad \subfigure{\includegraphics[width=5cm]{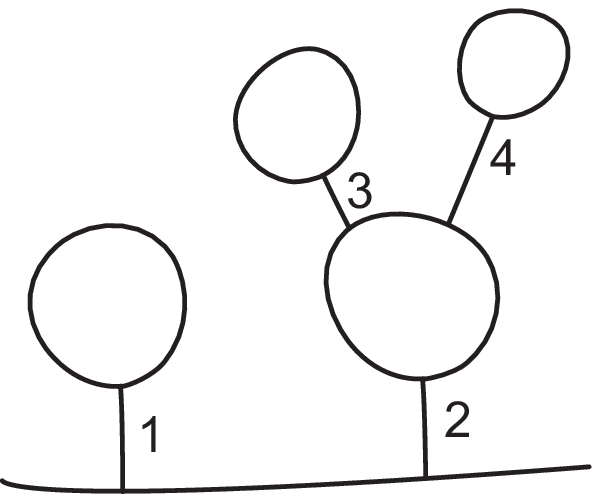}}
     \end{center}
    \end{figure}
(The above procedure of the bubble decomposition
can be illustrated in  the following figures. The above shadow parts in the
figure   stand for the neck regions. The above figures with number 1 can be viewed as  the case of a single bubble and the figures with numbers 2-4 can be seen  as  a special example of three different bubbles, but real cases of multiple bubbles are much more complicated.)

In conclusion, there are  finite   numbers $r^{k}_{\alpha}$, finite points $x^{k,l}_\alpha$, positive  constants $R_{k,l}$, $\delta_{k,l}$ and a finite number  of non-trivial Yang-Mills connections
$\tilde A_{k,l,\infty}$ on $S^4$ such that
\begin{align}\label{neckenergy}
 \lim_{\alpha\to 1} YM(A_{\alpha}; B_{R_0}(x_i))
=&YM(A_{\infty}; B_{R_0}(x_i) )+\sum_{k=1}^L\sum_{l=1}^{J_k}
YM(\tilde A_{k,l, \infty}; S^4)\\
+ &\sum_{k=1}^L \sum_{l=1}^{J_k} \lim_{R_{k,l} \to \infty} \lim_{\delta_{k,l} \to 0} \lim_{\alpha \to 1} YM(\tilde A_{k,l,\alpha}; B_{\delta_{k,l}} \backslash B_{R_{k,l}r^{k}_\alpha}(x^{k,l}_\alpha)).\nonumber
\end{align}
Moreover, at each neck region $B_{\delta_{k,l}} \backslash B_{R_{k,l}r^{k}_\alpha}(x^{k,l}_\alpha)$ in (\ref{neckenergy}), for all $\alpha$ sufficiently close to $1$, we have
 \begin{equation}\label{Basic} \int_{B_{2r} \backslash
B_r (x^{k,l}_{\a})}{|F_{\tilde  A_{k,l,\alpha}}|^2 dV} \leq \varepsilon \end{equation}
 for all $r \in (\frac{R_{k,l}r^k_\alpha}{2},2\delta_{k,l})$, where   $\varepsilon$ is a  fixed constant to be chosen sufficiently small.

Finally, we point out that the above bubble-neck decomposition could be formulated by the method of Parker \cite {Parker}.

 \subsection{No new bubble on each neck region}

According to the previous result on the bubble-neck decomposition (\ref{neckenergy}) with the property (\ref{Basic}), we will prove  no new bubble and energy loss  on each neck region  $B_{\delta_{k,l}} \backslash B_{R_{k,l}r^k_\alpha}(x^{k,l}_{\alpha})$.

For simplicity of notations, we may take $x^k_\alpha$ to be the origin of our coordinate system, and denote  $B_\delta=B_{\delta_{k,l}} (x^k_\alpha)$. Furthermore, if we choose coordinates which are orthonormal at
$x_\alpha$, then in the limit $\delta \to 0$ the metric on $B_\delta$ will approach the Euclidean metric. Therefore, in this section we assume without loss of generality that the
metric on $B_\delta$ is given by $g_{ij} = \delta_{ij}$.

Noting the assumption   (\ref {Basic}) on each annulus $B_{\delta} \backslash B_{Rr_\alpha}(0)$ and applying Lemma \ref{epsilonregularity}, we have that
\begin{equation} \label{supbound}r^{2} \|F \|_{L^\infty(B_{r} \setminus
B_{r/2})(x_{\a})} \leq C\sqrt{\varepsilon} , \quad \forall r\in(Rr_\alpha,\delta) \end{equation} for a sufficiently small constant $\varepsilon>0$.

We choose polar coordinates $(r, \theta)$ on $B_{\delta}$, where $\theta=(\theta^1,\theta^2,\theta^3)$ are coordinates on $S^3_r$. Our connection $1$-form can be decomposed as
 \[ A=A_r+ A_\theta,   \]  where $A_r = \left <A,dr\right>dr$   denotes the radial
part and $A_\theta  =\sum_{j=1}^3 A_{\theta_j}d\theta^j$ denotes the $S^3_r$-part. For a vector field $X=X_k\frac1 {\partial x^k}$, we define \[ X \rfloor F_A  = F_A (X, \, \cdot \, )=\frac 12 X_kF^{ij}dx^j, \] which belongs to $\Gamma(\operatorname{Ad} E \otimes T^*M)$. We write
\[F=F_{A,r \theta}dr\wedge d\theta +\frac 12 F_{\theta_i\theta_j} d\theta_i\wedge d\theta_j.\] Then $\frac{\partial}{\partial r} \rfloor F_A=F_{A,r \theta} d\theta$.

Following Uhlenbeck in \cite{Uhl.removable}, we will construct a broken Hodge gauge. We recall Theorem 2.8 and Corollary 2.9 of \cite{Uhl.removable} in the following:
\begin{Lemma4}
\label{key} Let $D$ be a covariant derivative in a vector bundle $E$ over \[\mathfrak{U}=\{x: 1\leq |x|\leq 2\}.\] There exists a constant $\gamma >0$ such that if
$\|F_A\|_{L^{\infty}(\mathfrak{U})}\leq \gamma $, then there is a gauge transformation in which $D=d+A$, $d^*A = 0$ on $\mathfrak{U}$, $d^*_{\theta} A_{\theta}=0$ on $S_1$ and $S_2$, and
$\int_{S_1}A_r=\int_{S_2}A_r=0$. Moreover, $\|A\|_{L^{\infty}(\mathfrak{U})}\leq C\|F_A\|_{L^{\infty}(\mathfrak{U})}$ and \[(\lambda_4-k' \|F_A\|_{L^{\infty}}^2
)\int_{\mathfrak{U}}|A|^2\leq \int_{\mathfrak{U}}|F_{A}|^2,\] where $\lambda_4$ and $k'$ are positive constants defined in Corollary 2.9 of \cite{Uhl.removable}. \end{Lemma4}

 For $l = \{-1, 0, 1,2,... \}$, we define
\begin{align*} \mathfrak{U}_l & = \{x : 2^{-l-1} \delta \leq |x| \leq 2^{-l} \delta \}, \\ S_l & = \{x:|x| = 2^{-l} \delta \}. \end{align*} We choose $n$ such that \[ B_\delta
\setminus B_{Rr_\alpha} \subseteq \sum_{l=0}^n \mathfrak{U}_l \subseteq B_{\delta} \setminus B_{\frac 1 2 Rr_\alpha}. \] A broken Hodge gauge is a continuous gauge transformation $s$
which satisfies:
 \begin{align*} a) & \quad d^*A_l =0  \quad \mbox{with $A_l = \left. s^*A_\alpha \right|_{\mathfrak{U}_l}$};\\
 b) & \quad \left. A_{l,\theta} \right|_{S_l} = \left. A_{l-1,\theta} \right|_{S_{l}};\\
 c) & \quad d^*_\theta A_{l,\theta} = 0\quad  \mbox{ on $S_l$ and $S_{l+1}$};\\
d) &\quad\int_{S_l}{A_{l,r} dS} = \int_{S_{l+1}}{A_{l,r} dS} = 0,\mbox {where $dS$ is the measure on the boundary $S^3$.}
 \end{align*}

 Since   $r^2\|F_A(x)\|_{L^{\infty}}$  can be chosen sufficiently small, we can apply  Theorem 4.6 of \cite{Uhl.removable} to obtain a broken Hodge gauge on $B_{\delta}\backslash B_{\frac 12 Rr_{\alpha}}$.   We henceforth assume that the
 connection
 $A_\alpha$ is expressed in this gauge.

  We prove the following lemma which will be used in the proof of Lemma \ref{covering}:

 \begin{Lemma3} \label{weitzenbock} Let $B_R$ be the geodesic ball of radius $R$ at a point $x_0 \in M$, and $\phi$ a smooth cut-off function on $B_R$ with $\phi = 1$ on $B_{kR}$,
$k<1$, and $|d\phi| \leq cR^{-1}$. Let $A$ be a connection on $B_R$. There exists a constant $\varepsilon_1>0$ such that if \[ \int_{B_R}{|F_A|^2 dV} < \varepsilon_1, \] then we have \[
\int_{B_{R}}{|\nabla_A F_A|^2} \phi^2 \leq C\int_{B_R}{|D_A^* F_A|^2 \phi^2 dV}+C(1+R^{-2})\int_{B_R}{|F_A|^2 dV}. \] \end{Lemma3} \begin{proof} We compute \begin{align*}
\int_{B_{R}}{|\nabla_A F_A|^2 \phi^2 dV} & \leq \int_{B_{R}}{\left(\left\langle \nabla_A F_A, \nabla_A (\phi^2 F_A)\right\rangle + 2 \phi |d \phi| |F| |\nabla_A F_A| \right)dV} \\ &
\leq \int_{B_R}{ \left( \left\langle \nabla_A^* \nabla_A F_A,\phi^2 F_A \right\rangle + C \phi R^{-1} |F| |\nabla_A F_A| \right) dV}. \end{align*} The second term is handled using
Young's inequality with $\varepsilon$: \[ \phi R^{-1} |F| |\nabla_A F_A| \leq C \varepsilon^{-1} R^{-2} |F|^2 + C \varepsilon |\nabla_A F_A|^2 \phi^2, \] for $\varepsilon > 0$ small. For
the first term, it follows from using the Weitzenb\"ock formula and the Bianchi identity that \begin{align*} &\int_{B_R}{\left\langle \nabla_A^* \nabla_A F_A,\phi^2 F_A \right\rangle
dV} =\int_{B_R}{\left\langle \Delta_A F_A + F_A \# F_A + R_M \# F_A,\phi^2 F_A \right\rangle dV} \\ \leq & \int_{B_R} \left( \left\langle D_A F_A, D_A (\phi^2 F_A) \right\rangle +
\left\langle D_A^* F_A, D_A^* (\phi^2 F_A) \right\rangle  +C \phi^2 |F_A|^2 +c \phi^2 |F_A|^3 \right) dV \\ \leq & \int_{B_R} \left( |D_A^* F_A|^2 \phi^2 +C(1+R^{-2})|F_A|^2\phi^2
+C|F_A|^3\phi^2 +C \phi R^{-1} |F| |D_A^* F_A| \right) dV. \end{align*}
 The last term is handled using Young's inequality. For the cubic term, using the H\"older and Sobolev inequalities, we have
\begin{align*} &\int_{B_R}{|F_A|^3\phi^2 dV}  \leq \left( \int_{B_R}{|F_A|^2 dV} \right)^{1/2} \left( \int_{B_R}{|F_A|^4 \phi^4 dV} \right)^{1/2} \\ & \leq C \left( \int_{B_R}{|F_A|^2
dV} \right)^{\frac 1 2}\left( \int_{B_R}{|F_A|^2\phi^2 dV} + \int_{B_R}{|\nabla_A (F_A\phi)|^2 dV}  \right) \\ & \leq C\varepsilon_1^{\frac 1 2}\left( \int_{B_R}{|F_A|^2\phi^2 dV} +
\int_{B_R}{|\nabla_A F_A|^2 \phi^2 dV} + CR^{-2} \int_{B_R}{|F_A|^2 dV}  \right). \\ \end{align*} Then choosing $\varepsilon_1$ small enough, the result follows. \end{proof}

\begin{Lemma4} \label{covering} There exists a positive constant $\varepsilon_2$ such that if \[\|F_{A_\alpha}\|_{L^\infty(\mathfrak{U}_l)} \leq \varepsilon_2 2^{4l}\delta^{-2},\] and
$\alpha$ is sufficiently close to $1$, we have \[ \int_{B_\delta \setminus B_{Rr_\alpha}}{|A_\alpha||\nabla_{A_\alpha} F_{A_\alpha}| dV} \leq C \int_{B_{2\delta} \setminus B_{\frac 1 4
Rr_\alpha}}{|F_{A_\alpha}|^2 dV}. \] \end{Lemma4} \begin{proof}
 It follows from using H\"older's inequality that
\begin{align*} \int_{B_\delta \setminus B_{Rr_\alpha}}{|A_\alpha||\nabla_{A_\alpha} F_{A_\alpha}| dV} & \leq  \sum_{l=0}^n \int_{\mathfrak{U}_l}{|A_\alpha||\nabla_{A_\alpha}
F_{A_\alpha}| dV}  \\ & \leq \sum_{l=0}^n \left( \int_{\mathfrak{U}_l}{|A_\alpha|^2 dV} \right)^{\frac 1 2} \left( \int_{\mathfrak{U}_l}{|\nabla_{A_\alpha} F_{A_\alpha}|^2 dV}
\right)^{\frac 1 2}. \end{align*} For the first factor, scaling in Lemma \ref{key} (see also Corollary 2.9 of \cite{Uhl.removable}), we have \begin{align} \int_{\mathfrak{U}_l}
{|A_{\a}|^2 dV} & \leq C2^{-2l}\delta^{2}(\lambda_4-k' 2^{-4l}\delta^{2} \|F_{A_\alpha}\|_{L^{\infty}(\mathfrak{U}_l)}^2)^{-1}\int_{\mathfrak{U}_l} {|F_{A_\alpha}|^2 dV} \nonumber \\ &
\leq C2^{-2l}\delta^{2}\int_{\mathfrak{U}_l} {|F_{A_\alpha}|^2 dV}, \label{firstfactor} \end{align} after choosing $\varepsilon_2$ sufficiently small. To deal with the second factor,
we cover $\mathfrak{U}_l$ with finitely many open balls $B_{2R/3}(i)$ such that $\cup_i B_R(i) \subset W_l = \mathfrak{U}_{l-1} \cup \mathfrak{U}_l \cup \mathfrak{U}_{l+1}$, for some $R$ with
$\frac {3\delta}{2^{l+3}}\leq R<\frac {3\delta}{2^{l+2}}$. We let $\phi_i$ be a smooth cut-off function on $B_R(i)$ with $\phi = 1$ on $B_{2R/3}(i)$ and $|d\phi_i| \leq CR^{-1}$.  Note
that from (\ref{eulerlagrange2}), we have \[ |D^*_{A_\alpha} F_{A_\alpha}| \leq C(\alpha-1)|\nabla_{A_\alpha} F_{A_\alpha}|. \] Applying Lemma \ref{weitzenbock} to the ball $B_R(i)$
yields \begin{align*} &\quad\int_{B_R(i)}{|\nabla_{A_\alpha} F_{A_\alpha}|^2 \phi_i^2 dV} \\ & \leq C\int_{B_R(i)}{|D_{A_\alpha}^* F_{A_\alpha}|^2 \phi_i^2 dV}+C(1+R^{-2})
\int_{B_R(i)}{|F_{A_\alpha}|^2 dV} \\ & \leq C(\alpha - 1)\int_{B_R(i)}{|\nabla_{A_\alpha} F_{A_\alpha}|^2 \phi_i^2 dV}+C(1+2^{2l}\delta^{-2}) \int_{B_R(i)}{|F_{A_\alpha}|^2 dV}.
\end{align*} Then for $\alpha$ sufficiently close to $1$, we find \[ \int_{B_R(i)}{|\nabla_{A_\alpha} F_{A_\alpha}|^2 \phi_i^2 dV} \leq C(1+2^{2l}\delta^{-2})
\int_{B_R(i)}{|F_{A_\alpha}|^2 dV}. \] Summing over $i$, it follows that \[ \int_{\mathfrak{U}_l}{|\nabla_{A_\alpha} F_{A_\alpha}|^2 dV} \leq C(1+2^{2l}\delta^{-2})
\int_{W_l}{|F_{A_\alpha}|^2 dV}. \] Finally, combining the above we have \begin{align*} &\quad \int_{B_\delta \setminus B_{Rr_\alpha}}{|A_\alpha||\nabla_{A_\alpha} F_{A_\alpha}| dV}\\
&\leq C\sum_{l=0}^n \left(2^{-2l}\delta^2\int_{\mathfrak{U}_l} {|F_{A_\alpha}|^2 dV } \right)^{\frac 1 2} \left((1+2^{2l}\delta^{-2}) \int_{W_l}{|F_{A_\alpha}|^2 dV} \right)^{\frac 1
2},
 \end{align*}
which implies the desired result. \end{proof}

\begin{Lemma5}  \label{boundary} There are a sequence $\delta\to 0$ and  a subsequence $\alpha_k \to 1$ such that  if $\int_{B_{2r} \setminus B_r}|F_A|^2 \leq \varepsilon_3$ for all $r\in( \frac  12 Rr_\alpha ,2\delta)$  and for a sufficiently small $\varepsilon_3 > 0$, the boundary integral satisfies
\[ \lim_{\delta \to 0} \lim_{\alpha_k \to 1}\int_{\partial B_\delta}{\left\langle A_{\alpha_k,\theta}, F_{A_{\alpha_k}, r
\theta} \right\rangle dS} = 0. \] \end{Lemma5}
 \begin{proof} Since $YM(A_{\alpha}) \leq K$ for each $\alpha$, it follows from Fatou's lemma that
  \[ \int_0^{R_0}\liminf_{\alpha \to 1} \int_{\partial
B_r} {|F_{A_\alpha, r \theta}|^2} dS dr \leq   K. \]
It follows that \begin{equation} \label{tozero} \lim_{\delta \to 0} \left( \delta \liminf_{\alpha \to 1}
\int_{\partial B_{\delta}} {|F_{A_\alpha, r \theta}|^2} dS \right) = 0. \end{equation} From Theorem 4.6 of \cite{Uhl.removable} (see also Lemma \ref{key}), in the Hodge gauge, we have \[ \delta \| A_{\alpha,\theta}
\|_{L^\infty(\partial B_\delta)} \leq C \delta^2 \| F_{A_\alpha} \|_{L^\infty(B_{\delta}\backslash B_{\frac{\delta}2 })} \leq C. \] This implies \[ \int_{\partial B_\delta} {| A_{\alpha,\theta}|^2}dS \leq
C\delta. \] By H\"older's inequality, we obtain \begin{align*} \int_{\partial B_\delta}{\left\langle A_{\alpha,\theta}, F_{A_\alpha, r \theta} \right\rangle dS} & \leq \left(
\int_{\partial B_\delta} {|A_{\alpha,\theta}|^2dS} \right)^{\frac 1 2} \left( \int_{\partial B_\delta} {|F_{A_\alpha, r \theta}|^2dS} \right)^{\frac 1 2} \\ & \leq C \left( \delta
\int_{\partial B_\delta} {|F_{A_\alpha, r \theta}|^2dS} \right)^{\frac 1 2}. \end{align*} Choosing a sequence $\delta\to 0$ and a suitable subsequence of $\alpha_k\to 1$, the claim follows from (\ref{tozero}). \end{proof}

\begin{Lemma7} \label{Removable} There exists a sufficiently small $\varepsilon > 0$ such that if $\int_{B_{2r} \setminus B_r}|F_A|^2 \leq \varepsilon$ for all $r\in(\frac  12 Rr_\alpha,2\delta)$, then
\[ \lim_{R \to \infty} \lim_{\delta \to 0} \lim_{\alpha \to 1} \int_{B_\delta \setminus B_{Rr_\alpha}}{|F_{A_\alpha}|^2 dV} = 0, \] where   the subsequence $\alpha \to 1$ is
suitably chosen. \end{Lemma7}
\begin{proof} Under the assumption,
(\ref{supbound}) holds, so the above broken Hodge gauge exists by choosing $\varepsilon$ sufficiently small. On each annulus $\mathfrak{U}_l$, we calculate \begin{align} &\int_{\mathfrak{U}_l} {|F_{A_\alpha}|^2 dV} =
\int_{\mathfrak{U}_l} {\left\langle dA_\alpha + [A_\alpha, A_\alpha] ,F_{A_\alpha} \right\rangle dV} \nonumber \\ = & \int_{\mathfrak{U}_l} {\left\langle D_{A_\alpha}A_\alpha +
A_\alpha\# A_\alpha, F_{A_\alpha} \right\rangle dV} \nonumber \\ = & \int_{\mathfrak{U}_l} {\left\langle A_\alpha ,D_{A_\alpha}^*F_{A_\alpha} \right\rangle dV} +
\int_{\mathfrak{U}_l} {\left\langle A_\alpha\# A_\alpha,F_{A_\alpha} \right\rangle dV} \nonumber \\ & + \int_{S_l}{\left\langle A_{\alpha,\theta}, F_{A_\alpha, r \theta} \right\rangle
dS} - \int_{S_{l+1}}{\left\langle A_{\alpha,\theta}, F_{A_\alpha, r \theta} \right\rangle dS}, \label{calculation} \end{align}
where $A_\alpha\# A_\alpha$ denotes a multi-linear combination of $A_{\alpha}$ and $A_{\alpha}$. The boundary terms here can be derived by constructing a
radial cut-off function $\phi(r)$ with $\phi = 1$ on $\mathfrak{U}_l$, and $\phi = 0$ outside of a slightly larger annulus \[ \mathfrak{U'}_l = \{x : 2^{-l-1}\delta  - \varepsilon \leq
|x| \leq 2^{-l} \delta + \varepsilon \}. \] The derivative is $d \phi = \frac 1 \varepsilon dr$ on $\mathfrak{U'}_l \setminus \mathfrak{U}_l$, and zero elsewhere. Then \[
\int_{\mathfrak{U'}_l} {\left\langle D_{A_\alpha}A_\alpha, F_{A_\alpha} \right\rangle \phi dV} = \int_{\mathfrak{U'}_l} {\left\langle A_\alpha, D_{A_\alpha}^* F_{A_\alpha}
\right\rangle \phi dV} - \int_{\mathfrak{U'}_l} {\left\langle d\phi \wedge A_\alpha, F_{A_\alpha} \right\rangle dV}, \] where the last term will become the boundary terms of
(\ref{calculation}) in the limit $\varepsilon \to 0$. Summing (\ref{calculation}) over $l$ and using equation (\ref{eulerlagrange2}), we find \begin{align*} & \sum_{l=0}^n
\int_{\mathfrak{U}_l} {| F_{A_\alpha}|^2 dV} =  \sum_{l=0}^n \int_{\mathfrak{U}_l} {\left\langle A_\alpha\# A_\alpha, F_{A_\alpha} \right\rangle dV} + \int_{S_0}{\left\langle
A_{\alpha,\theta}, F_{A_\alpha, r \theta} \right\rangle dS} \\ & \quad - \int_{S_{n+1}}{\left\langle A_{\alpha,\theta}, F_{A_\alpha, r \theta} \right\rangle dS} + (\alpha-1)
\sum_{l=0}^n \int_{\mathfrak{U}_l}{\left\langle A_\alpha, \frac{*(d|F_{A_\alpha}|^2\wedge*F_{A_\alpha})}{1+|F_{A_\alpha}|^2} \right\rangle dV}. \end{align*} Choosing a suitable subsequence of $\alpha\to 1$, two boundary terms are
going to zero by Lemma \ref{boundary}. Using Lemma \ref{covering}, the final term is bounded by \[ C (\alpha - 1) \sum_{l} \int_{\mathfrak{U}_l}{|A_\alpha||\nabla_{A_\alpha}
F_{A_\alpha}| dV} \leq  C(\alpha - 1)\int_{B_{4\delta} \setminus B_{\frac 1 4 Rr_\alpha}}{|F_{A_\alpha}|^2 dV}, \] which is going to zero as $\alpha \to 1$. For the first term,
recalling (\ref{firstfactor}), \begin{align*} \int_{\mathfrak{U}_l}{|\left\langle A_\alpha\# A_\alpha, F_{A_\alpha} \right\rangle |} & \leq C\|F_{A_\alpha}\|_{L^\infty(\mathfrak{U}_l)}
\int_{\mathfrak{U}_l} {|A_\alpha|^2 dV} \\ & \leq C2^{-2l}\delta^2 \|F_{A_\alpha}\|_{L^\infty(\mathfrak{U}_l)} \int_{\mathfrak{U}_l} {|F_{A_\alpha}|^2 dV}. \end{align*} Note that
$2^{-2l}\delta^2 \|F_{A_\alpha}\|_{L^\infty(\mathfrak{U}_l)}$ can be made sufficiently small, so that this term may be absorbed into the left hand side. The required result follows
from Lemma \ref {boundary} and the uniform bound of $\int_M |F_{A_{\a}}|^2\,dV$. \end{proof}

We now complete the proof of Theorem \ref{maintheorem}. \begin{proof} [Proof of Theorem \ref{maintheorem}] Theorem \ref{maintheorem} follows from Lemma \ref{Removable} and the bubble-neck decomposition (\ref{neckenergy}) by choosing $\varepsilon$ sufficiently small. \end{proof}

 By letting $\alpha_i = 1$ for all $i$,    Theorem \ref{maintheorem} yields  a new proof of a result of Rivi\`ere \cite{Riviere} on sequences of Yang-Mills connections:

\begin{Corollary} \label{corol}
    Let  $A_i$ be a sequence of smooth Yang-Mills connections on $E$.  Then there exists a finite set $S\subset M$, and a sequence of gauge transformations $s_i$ defined on
    $M\setminus
    S$, such that for any compact $K\subset M\setminus S$, $s_i^*{A}_i$ converges to $A_{\infty}$ smoothly in $K$. Moreover, there are a finite number of bubble bundles
    $E_1,\cdots,E_l$ over $S^4$ and Yang-Mills connections $\tilde{A}_{1,\infty},\cdots,\tilde{A}_{l,\infty}$ such that
    \begin{equation*}
        \lim_{i\to\infty} YM(A_i)=YM(A_{\infty})+\sum_{i=1}^l YM(\tilde{A}_{i,\infty}).
    \end{equation*}
\end{Corollary}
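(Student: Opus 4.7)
The plan is to derive Corollary \ref{corol} as the special case of Theorem \ref{maintheorem} obtained by taking the constant sequence $\alpha_i \equiv 1$. First I would note that the Euler--Lagrange equation (\ref{eulerlagrange1}) with $\alpha = 1$ reduces to the Yang--Mills equation (\ref{1.1}), so every smooth Yang--Mills connection $A_i$ is trivially a Yang--Mills $\alpha_i$-connection with $\alpha_i = 1$. If a uniform energy bound is not already assumed as part of the hypothesis, it can be imposed by passing to a subsequence; Uhlenbeck's weak compactness then supplies a finite concentration set $S$ and gauge transformations $s_i$ so that $s_i^* A_i$ converges smoothly to a Yang--Mills connection $A_\infty$ on compact subsets of $M \setminus S$.

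Next, I would trace through Section~3 with $\alpha_i \equiv 1$ and check that every step remains valid. The bubble-neck decomposition of Section~3.1 proceeds verbatim: the $\varepsilon$-regularity of Lemma \ref{epsilonregularity} at $\alpha = 1$ recovers the classical Uhlenbeck $\varepsilon$-regularity, rescaling at each concentration point produces a nontrivial Yang--Mills bubble on $\mathbb{R}^4$ which extends to $S^4$ via \cite{Uhl.removable}, and the Bourguignon--Lawson gap theorem terminates the iteration after finitely many steps. The broken Hodge gauge construction of Uhlenbeck (Lemma \ref{key}) was stated for Yang--Mills connections in the first place, so it applies directly, with the required smallness of $r^2 \|F_{A_i}\|_{L^\infty}$ on each annulus furnished by the neck-energy assumption (\ref{Basic}) combined with $\varepsilon$-regularity.

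The key observation is that Lemma \ref{Removable} simplifies substantially when $\alpha = 1$: in the sum over annuli
\begin{align*}
\sum_{l=0}^n \int_{\mathfrak{U}_l} |F_{A_i}|^2 \, dV &= \sum_{l=0}^n \int_{\mathfrak{U}_l} \langle A_i \# A_i, F_{A_i} \rangle \, dV \\
&\quad + \int_{S_0} \langle A_{i,\theta}, F_{A_i, r\theta} \rangle \, dS - \int_{S_{n+1}} \langle A_{i,\theta}, F_{A_i, r\theta} \rangle \, dS,
\end{align*}
the $(\alpha - 1)$-correction term vanishes identically, so Lemma \ref{covering} is not required at all. The remaining boundary terms tend to zero along a suitable subsequence by Lemma \ref{boundary}, and the cubic term is absorbed using the scaling estimate (\ref{firstfactor}) from Lemma \ref{key} after exploiting the $L^\infty$-smallness of $F_{A_i}$ on each $\mathfrak{U}_l$. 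Summing the resulting no-neck conclusion over all concentration points and combining with smooth convergence on $M \setminus S$ and the bubble energies as in (\ref{neckenergy}) produces the stated identity. Since every tool invoked was either already insensitive to the value of $\alpha$ or actually simplified by setting $\alpha = 1$, there is essentially no obstacle beyond careful bookkeeping, which is why the author phrases this corollary as an immediate consequence of Theorem \ref{maintheorem}.
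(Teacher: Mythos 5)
Your proposal is correct and matches the paper's own (essentially one-line) argument: the corollary is obtained from Theorem \ref{maintheorem} by taking $\alpha_i\equiv 1$, under which every step of Section 3 goes through and indeed simplifies, since the $(\alpha-1)$-correction term in Lemma \ref{Removable} vanishes and Lemma \ref{covering} becomes unnecessary. One small caveat: a uniform energy bound cannot be ``imposed by passing to a subsequence'' if the energies $YM(A_i)$ are unbounded --- it is an implicit hypothesis of the corollary (as in Rivi\`ere's theorem), not something the argument can arrange.
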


As a second  consequence of Theorem \ref{maintheorem}, we can also give a simple proof of the energy identity for a minimizing sequence (see \cite {Isobe1}, \cite {HTY}):

 \begin{Proposition} \label{prop}
    Let  $E$ be a vector bundle over $M$. Assume that $A_i$ is a minimizing sequence of the Yang-Mills functional $YM$ among smooth connections on $E$, which converges weakly to some
    limit connection $A_{\infty}$ by Sedlacek's result \cite{Sedlacek.direct}.
Then there are a finite number of bubble bundles $E_1,\cdots,E_l$ over $S^4$ and Yang-Mills connections $\tilde{A}_{1,\infty},\cdots,\tilde{A}_{l,\infty}$ such that
    \begin{equation*}
        \lim_{i\to\infty} YM(A_i)=YM(A_{\infty})+\sum_{i=1}^l YM(\tilde{A}_{i,\infty}).
    \end{equation*}
\end{Proposition}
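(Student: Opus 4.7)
The plan is to deduce Proposition \ref{prop} from Theorem \ref{maintheorem} by replacing the minimizing sequence $\{A_i\}$ with a nearby sequence of Yang-Mills $\alpha_i$-connections with $\alpha_i \to 1$. The quantitative bridge is the elementary inequality $(1+x)^\alpha \geq 1+x$ for $\alpha \geq 1$, $x \geq 0$, which gives
\[
YM(A) \leq YM_\alpha(A) - |M|
\]
for every connection $A$ on $E$ and every $\alpha > 1$. Conversely, since each $A_i$ is smooth and $|F_{A_i}|$ is bounded, for each $i$ we may select $\alpha_i \in (1, 1+1/i)$ close enough to $1$ that $YM_{\alpha_i}(A_i) - |M| - YM(A_i) < 1/i$.

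Starting from $A_i$, run the Yang-Mills $\alpha_i$-flow, which by the global existence result of \cite{HTY} admits a smooth solution for all time whose $\omega$-limit contains, by Theorem \ref{Theorem1}, a smooth Yang-Mills $\alpha_i$-connection $\tilde A_{\alpha_i}$. The monotonicity of $YM_{\alpha_i}$ along the flow yields $YM_{\alpha_i}(\tilde A_{\alpha_i}) \leq YM_{\alpha_i}(A_i)$, so combining with the key inequality,
\[
\inf YM \;\leq\; YM(\tilde A_{\alpha_i}) \;\leq\; YM_{\alpha_i}(\tilde A_{\alpha_i}) - |M| \;\leq\; YM(A_i) + \tfrac{1}{i},
\]
whence $YM(\tilde A_{\alpha_i}) \to \inf YM = \lim_i YM(A_i)$. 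In particular, $\{\tilde A_{\alpha_i}\}$ is a sequence of Yang-Mills $\alpha_i$-connections with uniformly bounded $YM$-energy, so Theorem \ref{maintheorem} applies and produces bubble Yang-Mills connections $\tilde A_{j,\infty}$ on bundles $E_j$ over $S^4$ together with a limiting Yang-Mills connection $\tilde A_\infty$ such that
\[
\lim_{i\to\infty} YM(\tilde A_{\alpha_i}) = YM(\tilde A_\infty) + \sum_j YM(\tilde A_{j,\infty}).
\]

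The main remaining obstacle is to identify $YM(\tilde A_\infty)$ with $YM(A_\infty)$, where $A_\infty$ is the Sedlacek weak limit of $\{A_i\}$. Both $\{A_i\}$ and $\{\tilde A_{\alpha_i}\}$ are $YM$-minimizing sequences on $E$ with identical asymptotic energy; hence both converge weakly (after gauge) to Yang-Mills connections on bundles over $M$ whose topological discrepancy from $E$ is accounted for by the bubble charges. An argument modeled on Sedlacek \cite{Sedlacek.direct} and the analogous energy-identity step in \cite{HTY}---exploiting that on $M \setminus S$ the flow convergence is smooth, while the bubble topologies are determined by the Chern classes on the small balls $B_{Rr_\alpha^{k,l}}(x_\alpha^{k,l})$---yields that the two weak limits carry equal $YM$-energy. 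Substituting $YM(\tilde A_\infty) = YM(A_\infty)$ into the identity above gives the claimed energy identity for $\{A_i\}$.
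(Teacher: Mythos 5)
Your overall strategy --- replace the minimizing sequence by the Yang--Mills $\alpha_i$-flow started at $A_i$ and then invoke the machinery of Theorem \ref{maintheorem} --- is the same as the paper's, and the energy bookkeeping via $(1+x)^\alpha\geq 1+x$ and the monotonicity of $YM_{\alpha_i}$ along the flow is correct: it does show that your $\tilde A_{\alpha_i}$ form a minimizing sequence of genuine Yang--Mills $\alpha_i$-connections with $\lim_i YM(\tilde A_{\alpha_i})=\lim_i YM(A_i)$. But there is a genuine gap exactly where you flag "the main remaining obstacle," and the justification you sketch does not close it. You flow to $t=\infty$ to reach an honest critical point, and at that moment you lose all quantitative control relating $\tilde A_{\alpha_i}$ to $A_i$: the total dissipation $\int_0^\infty\int_M|\partial_t A_{\alpha_i}|^2\,dV\,dt\to 0$ does \emph{not} bound the displacement $\|\tilde A_{\alpha_i}-A_i\|_{L^2}$, since Cauchy--Schwarz over an infinite time interval gives nothing (a gradient flow can travel arbitrarily far while dissipating arbitrarily little energy). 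Consequently the weak limit $\tilde A_\infty$ of the flowed sequence need not coincide with, nor have the same energy as, the Sedlacek limit $A_\infty$ of the original sequence. Your fallback argument --- that both are minimizing sequences with the same asymptotic energy, hence their weak limits carry equal energy --- is false in general: two minimizing sequences on the same bundle can bubble differently and converge weakly to Yang--Mills connections on different bundles $E'$ with different energies $YM(A_\infty)\neq YM(\tilde A_\infty)$, the discrepancy being absorbed by different bubble collections. Since the Proposition asserts the identity with the \emph{given} Sedlacek limit $A_\infty$, this identification is the whole point and cannot be waved through.

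The paper avoids this by stopping the flow at a \emph{bounded} time $t_0\in[1/2,1]$ chosen so that $\int_M|\partial_t A_{\alpha_i}(\cdot,t_0)|^2\,dV\to 0$; such a $t_0$ exists because the sequence is minimizing, so the total dissipation over $[0,1]$ tends to zero. The slices $A_{\alpha_i}(t_0)$ are then only \emph{approximate} Yang--Mills $\alpha_i$-connections, but over the bounded interval one has $\|A_{\alpha_i}(t_0)-A_i\|_{L^2}^2\leq t_0\int_0^{t_0}\int_M|\partial_t A_{\alpha_i}|^2\to 0$, which forces the modified sequence to have the \emph{same} weak limit $A_\infty$ as $A_i$ (this is the content of the corresponding result in \cite{HTY}). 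The price is that the bubble--neck analysis of Theorem \ref{maintheorem} must be rerun for approximate critical points (the extra term $\partial_t A$ enters the Euler--Lagrange equation with an $L^2$-small coefficient), which is what "the same arguments as for Theorem 2" refers to. To repair your proof you would either need to adopt this bounded-time stopping argument, or supply a genuine proof that the $\omega$-limit of the infinite-time flow retains the Sedlacek limit's energy --- which is not available from the estimates you cite.
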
 \begin{proof} We choose a sequence $\alpha_i\to 1$, and let $A_{\alpha_i}(t)$ be a solution to the Yang-Mills $\alpha_i$-flow of \cite{HTY}  with initial condition
$A_{\alpha_i}(0)=A_i$ (see a similar argument for harmonic maps in  \cite {HongYinSU}). Then $A_{\alpha_i}$ is a smooth solution to the flow equation
 \begin{equation}\label{eqn:alphaflow}
  \frac{\partial A_{\alpha_i}}{\partial t}=-D_{A_{\alpha_i}}^* F_{A_{\alpha_i}}+(\alpha_i-1)\frac{*(d|F_{A_{\alpha_i}}|^2\wedge *F_{A_{\alpha_i}})}{1+|F_{A_{\alpha_i}}|^2}.
\end{equation} Since $A_i$ is a minimizing sequence, by suitably choosing the sequence $\alpha_i\to 1$, there exists at least one $t_0\in [1/2,1]$ such that for all $i$,
    \begin{equation}
        \int_M |\partial_t A_{\alpha_i}(\cdot,t_0)|^2 dV \to 0.
    \end{equation}
The result then follows from the same arguments as for Theorem 2. See \cite{HTY} for more details. \end{proof}

\subsection{Applications to the Yang-Mills flow}

In order to prove Theorem 3,  we need the following local energy inequality:
\begin{Lemma6}  \label{Local}
 Let $A(t)$ be a solution to the Yang-Mills flow (\ref {YMHF}) in $M\times[0,T)$ for a $T>0$ with initial value $A(0)=A_0$. For any $x_0$ with $B_{2R}(x_0)\subset X$
and  for any two  $s,\tau \in [0, T)$ with $s<\tau$, we have
\begin{align*}
 \quad \int_{M}|F_A|^2 (\cdot ,s)\,dV+ \int_0^s \int_M|\partial_t
A|^2\,dV\,dt \leq \int_{M}|F_{A_0}|^2 \,dV, \end{align*}
\begin{align*}  \int_{B_{R}(x_0) }|F_A|^2 (\cdot ,\tau)\,dV\leq &\,\int_{B_{2R}(x_0)} |F_A|^2 (\cdot ,s)|^2\,dV
+  \frac {C(\tau-s)}{R^2}\,\int_M |F_{A_0}|^2\,dV  \end{align*}
and
\begin{align*}  \int_{B_{R}(x_0) }|F_A|^2 (\cdot ,s)\,dV\leq &\,\int_{B_{2R}(x_0)} |F_A|^2 (\cdot , \tau )\,dV
+C\int_{s}^{\tau }\int_{B_{2R}(x_0)} |\partial_t A|^2\,dV\,dt\\ +&C\left (\frac {(\tau -s)}{R^2}\,\int_{B_{2R}(x_0)} |F_{A_0}|^2\,dV\, \int_{s}^{\tau
}\int_{B_{2R}(x_0)}|\partial_t A|^2\,dV\,dt \right )^{1/2}\,. \end{align*}
\end{Lemma6}
\begin{proof} The proof can be found in \cite{StruweYMheat} and  Lemma 5 of \cite{HT}. \end{proof}
Now we present a proof of Theorem \ref{thm3}. \begin{proof}[Proof of Theorem \ref{thm3}] Let $T$ be the maximal existence time of the Yang-Mills flow.  By the result of Struwe in \cite
{StruweYMheat}, there are a finite number of points $\{x_1,..., x_l\}$ such that $A(t)$ converges, up-to gauge transformations,  to a connection $A(T)$  in $C^{\infty}_{loc}(M\backslash\{x_1,...,
x_l\})$ as  $t\to T$. In a local trivialization of $E$ near each singularity $x_j$, $D(t)$ can be expressed by $d+ A(t)$ with $A(t)=A_i(x,t) dx^i$. At each singularity $x_j$, there is a
$R_0>0$ such that there is no singularity inside $B_{R_0}(x_j)$. Then there is a $\Theta>0$ such that as $t\to T$
\begin{equation}\label{limit}  |F_{A(t)}|^2 dV\to \Theta \delta_{x_j} + |F_{A(T)}|^2 dV,\end{equation}
 where
$\delta_{x_j}$ denotes the Dirac mass at the singularity $x_j$. This can be proved by using Lemma \ref{Local}   (e.g. \cite {LW}). Then there exist sequences $r_k\to 0$, $t_k\to T$ such that as
$k\to\infty$,
\begin{equation}\label{dense}\lim_{k\to \infty} \int_{B_{r_k}(x_j)}|F_{A(t_k)}|^2 \,dV=\Theta .\end{equation}
We consider the rescaled connection \[ D_{\tilde A_k}(t)=d
+\tilde A_k(x,t):= d + r_kA_i(x_j+r_kx, t_k+r_k^2 t)dx^i.\] Then $\tilde A_k(t)$ satisfies
\begin{equation}\label{YMF}\frac {\partial \tilde A}{\partial t}=-D_{\tilde A}^*F_{\tilde
A}\quad  \mbox{in  }B_{r_k^{-1}(0)}\times [-2, 0]\end{equation}
and  \begin{equation} \int_{-2}^{0} \int_{B_{r_k^{-1}}(0)} |\partial_t \tilde A_k|^2=\int^{t_k}_{t_k-2r_k^2}
\int_{B_{r_k^{-1}}(0)} |\partial_t A(t)|^2\leq \int^{t_k}_{t_k-2r_k^2} \int_{M} |\partial_t A(t)|^2\to 0  \end{equation}
 as $k\to \infty$.
Then there is a $\tilde t_k\in (-1, 0)$ such that
\begin{equation}\label{GR}
 \int_{B_{r_k^{-1}}(0)} |\partial_t \tilde A_k(\tilde t_k)|^2\to 0,  \quad \lim_{k\to \infty} \int_{B_{1}(0)}|F_{\tilde A_k}(\cdot ,0)|^2 =\Theta . \end{equation}
 By applying Lemma \ref{Local} again, we have
\begin{align*} \int_{B_{R}(0)} |F_{\tilde A_k}(\cdot ,\tilde t_k)|^2 \geq &\int_{B_{1}(0)}|F_{\tilde A_k}(\cdot ,0)|^2 -\frac {C}{R^2}\,\int_M |F_{A_0}|^2\,dV\, \end{align*}
 for $R>2$. This implies
\[ \lim_{k\to\infty}\int_{B_{R}(0)} |F_{\tilde A_k(\tilde t_k)}|^2\geq  \Theta .\]
By (\ref{limit}), we know
\[\lim_{k\to\infty}\int_{B_{R}(0)} |F_{\tilde A_k(\tilde
t_k)}|^2= \Theta.\]
Since $\tilde A_k(t)$ satisfies the Yang-Mills flow (\ref{YMF}), a parabolic $\varepsilon$-regularity estimate holds; i.e. there is a constant $ \varepsilon>0$  such that if
 \begin{equation}\label{small-sup}\sup_{\tilde
t_k-4r^2\leq t\leq \tilde t_k}\int_{B_{r}(0)}|F_{\tilde A_k}|^2dV<  \varepsilon  \end{equation} for some small $r>0$,
then we have
\[  |F_{\tilde A_k}(0, \tilde
t_k)|^2 \leq \frac C {r^6} \int_{P_r(\tilde
t_k)}|F_{\tilde A_k}(
t)|^2\,dV\,dt\leq \frac {C\varepsilon} {r^4},\]
where $P_r(\tilde
t_k)=B_r(0)\times (\tilde
t_k-r^2, \tilde
t_k]$. For the above $\varepsilon$ in  (\ref{small-sup}),  there is a constant $R$  satisfying
 \begin{equation}  \int_{B_{2r}(0)}|F_{\tilde A_k}(t_k)|^2dV< \varepsilon .  \end{equation} Using (\ref{GR})  we  apply  Lemma \ref{Local}   to obtain that
\begin{align*}\int_{B_{r}(0)}|F_{\tilde A_k}|^2dV\leq &C\int_{B_{2r}(0)}|F_{\tilde A_k}(\cdot ,\tilde t_k)|^2 +C\int_{\tilde t_k-4r^2}^{\tilde t_k }\int_{B_{2r}(0)} |\partial_t \tilde A_k|^2\,dV\,dt\\  &+C\left (\,\int_{M} |F_{A_0}|^2\,dV\, \int_{\tilde t_k-4r^2}^{\tilde t_k }
\int_{B_{2r}(0)}|\partial_t \tilde A_k|^2\,dV\,dt \right )^{1/2}\,\\
&<C\varepsilon \end{align*}
for sufficiently large $k$.
With these estimates, the bubble tree procedure also works for the sequence of connections $A_k(\tilde t_k)$ on each $B_R(0)$. Therefore, using the proof of Theorem \ref{maintheorem}  on each $B_R(0)$, there is a finite number $l_R$ bubbling Yang-Mills connections $\tilde A_{i, R}$ on $S^4$ such that
 \begin{equation*}\lim_{k\to\infty}\int_{B_{R}(0)} |F_{\tilde
A_k(\tilde t_k)}|^2=\sum_{i=1}^{l_R}YM(\tilde A_{i, R}; S^4). \end{equation*}
By using \cite {BL}, there is a constant $\varepsilon_0>0$ such that any non-trivial Yang-Mills connection $\tilde A_{i,
R}$ on $S^4$ has \[YM(A_{i, R}; S^4)\geq \varepsilon_0,\] which implies $1\leq l_R\leq [\frac {\Theta}{\varepsilon_0}]$. As $R\to \infty$, it follows from Corollary \ref{corol} that
\begin{align*} \Theta &=
\lim_{R\to \infty}\lim_{k\to\infty}\int_{B_{R}(0)} |F_{\tilde A_k(\tilde t_k)}|^2\\
&=\lim_{R\to\infty}\sum_{i=1}^{l_R}YM(A_{i, R}; S^4)=\sum_{j=1}^{l}YM(\tilde A_{j,\infty}; S^4), \end{align*}
where each $\tilde A_{j,\infty}$ is a Yang-Mills connections in a vector bundle $E_j$ over  $S^4$. This proves Theorem 3.
\end{proof}

Finally, we make a remark about the case of $T=\infty$.  Let $E$ be a complex vector bundle over a $4$-manifold $M$. Let $A$ be a global  smooth solution of the
Yang-Mills flow equation (\ref{YMHF}) in $M\times [0,\infty )$ with
initial value $A_0$. For any a sequence $\{t_k\}$ with $t_k\to\infty$, there is a
subsequence, still denoted by $\{t_k\}$, such that as
$k\to\infty$, $A(x,t_k)$ converges in $C^{\infty}(M\backslash
\Sigma)$
 to a solution $A_{\infty}$ of the Yang-Mills  equation (1.2), where $\Sigma$   is a finite   set of singularities in $M$. At each singularity $x_i$,
  \[
\liminf_{k\to\infty}\,\int_{B_r(x_i)}  |F_A|^2(\cdot,
t_k)|^2\,dV\geq \varepsilon_0  \]
for a constant $\varepsilon_0>0$. The second Chern classes (e.g. \cite {Tian.calibrated}) is defined by
\[ C_2(E)=\frac 1{8\pi^2}[\mbox {tr}(F_{A}\wedge F_{A})-\mbox {tr} F_{A}\wedge   \mbox {tr} F_{A}].\]

Theorem \ref{thm3} implies that
\begin{align*} C_2(E; M)&=\frac 1{8\pi^2}\lim_{k\to\infty}\int_M [\mbox {tr}(F_{A(t_k)}\wedge F_{A(t_k)})-\mbox {tr} F_{A(t_k)}\wedge   \mbox {tr} F_{A(t_k)}]\\
&
=\frac 1{8\pi^2} \int_M [\mbox {tr}(F_{A_{\infty}}\wedge F_{A_{\infty}})-\mbox {tr} F_{A_{\infty}}\wedge   \mbox {tr} F_{A_{\infty}}]\\
&+\frac 1{8\pi^2} \sum_{i=1}^l\int_{S^4} [\mbox {tr}(F_{\tilde A_i}\wedge F_{\tilde A_i})-\mbox {tr} F_{\tilde A_i}\wedge   \mbox {tr} F_{\tilde A_i}]\\
&=C_2(E_{\infty}; M)+\sum_{i=1}^lC_2(E_{i}; S^4) ,
\end{align*}
where $C_2(E_{\infty}; M)$ is the second Chern number of the limiting bundle induced by $A_{\infty}$, and $C_2(E_{i}; S^4)$ is the second Chern number of the bubbling bundle $E_i$ over $S^4$.

\section{Appendix: Regularity}

In this section we prove that a Yang-Mills $\alpha$-connection, a weak solution to the Euler-Lagrange equation (\ref{eulerlagrange1}), is gauge-equivalent to a smooth solution.
  We say that $A \in \mathcal{A}^{1,2 \alpha}$ is a weak solution to (\ref{eulerlagrange1}) if
\[ \int_M {\left( 1 + |F_A|^2 \right)^{\alpha - 1}\left\langle F_A,D_A B\right\rangle dV}=0 \] for any $B \in C^{\infty}(\mathcal A)$; i.e. $A$ satisfies

 \begin{equation}\label {5.1}
 d^*_A\left ((1+|F_A|^2)^{\alpha -1} F_A\right )=0 \quad \mbox { in } U \end{equation}
 in the weak sense.

The proof is essentially similar  to the one  in \cite{Isobe}   for $p$-Yang-Mills connections (see a similar approach  in \cite {GH}). Since there are some differences, we would like to outline the main points.
 Let $x_0$ be a point in $M$. For any $\varepsilon_0$, there is a sufficiently small $R_0$ so that for each $R>0$ with $R\leq R_0$,
 \[\int_{B_{R}(x_0)} |F_A|^2\leq \int_{B_{R_0}(x_0)}|F_A|^2\leq \varepsilon_0.\]
For a sufficiently small  $\varepsilon_0$, there is a gauge transformation $\sigma$ (see \cite {U2}) such that $\sigma^* (D_A)=d+A$ with $d^*A=0$ in $B_{R}(x_0)$, $A\cdot \nu =0$ on $\partial  B_R(x_0)$, and
 \[\int_{B_{R}(x_0)} R^{-2\alpha} |A|^{2\alpha} +|\nabla
A|^{2\alpha} \leq \int_{B_{R}(x_0)}|F_A|^{2\alpha}.\]
Similarly to Lemma 2.1 in \cite {Isobe}  there is a $B$
such that
\begin{align}\label {5.1}
&d^*\left ((1+|dB|^2)^{\alpha -1} dB\right )=0\quad \mbox { in }  B_{R}(x_0),\\
&d^*B=0  \quad \mbox { in }  B_{R}(x_0),\\
& i^*B=i^*A \quad \mbox{ on } \partial B_{R}(x_0).\end{align}
By the result of \cite{U1}, we have
\begin{align*}
 \int_{B_{\rho} (x_0)} |d B|^{2\alpha}dV \leq C \left (\frac {\rho}{R}\right )^4 \int_{B_R(x_0)}  |dB|^{2\alpha } dV\leq C \left (\frac {\rho}{R}\right )^4 \int_{B_{R}(x_0)}  |dA|^{2\alpha }  dV \end{align*}
 for any $\rho<R\leq R_0$.
To compare the above equations of $A$ and $B$, we have

\begin{align*}
& \quad \int_{B_{R}(x_0)} |F_A-dB|^{2\alpha}dV\\
& \leq C \int_{B_{R}(x_0)} \left < (1+|F_A|^2)^{\alpha -1} F_A-(1+|dB|^2)^{\alpha -1} dB, F_A-dB\right> \,dV\\
&  \leq C \int_{B_{R}(x_0)}  \left | (1+|F_A|^2)^{\alpha -1} F_A-(1+|dB|^2)^{\alpha -1} dB\right ||A|^2 \,dV\\
& \leq \varepsilon  \int_{B_{R}(x_0)}  (1+|F_A|^2)^{\alpha}\,dV + C \int_{B_{R} (x_0)}(1+ |A|^{4\alpha})\,dV
 \end{align*}
 for a sufficiently small $\varepsilon$, which will be determined later.
 By the H\"older and Sobolev inequalities, we have
\begin{align*}
&\int_{B_{R} (x_0)} |A|^{4\alpha} \,dV\leq CR^{4(\alpha -1)} \left (\int_{B_{R} (x_0)} |A|^{4\alpha \frac 1 {2-\alpha}}\,dV \right )^{2-\alpha}\\
&\leq CR^{4(\alpha -1)} \left (\int_{B_{R} (x_0)} R^{-2\alpha} |A|^{2\alpha} +|\nabla A|^{2\alpha}\,dV\right )^{2}\\
&\leq CR^{4(\alpha -1)} \left (\int_{B_{R}(x_0)}  |F_A|^{2\alpha}\,dV\right )^2\leq C_1R^{4(\alpha -1)}\int_{B_{R}(x_0)}  |F_A|^{2\alpha}\,dV.
 \end{align*}
Combining the above two estimates, we can get that for any $\rho<R\leq R_0$,
\begin{align}\label {4.2}\int_{B_{\rho}(x_0)} (1+|F_A|^2)^{\alpha}\,dV \leq C\left [\left (\frac
{\rho}{R}\right )^{4} +\varepsilon + R^{4(\alpha -1)} \right ]\int_{B_{R}(x_0)} (1+|F_A|^2)^{\alpha}\,dV +CR^4. \end{align}
For a sufficiently small $R_0$ and $\varepsilon$, a well-known lemma (see Lemma 2.1 of Chapter III in \cite {G}) implies that for any small constant $\delta >0$, there is a constant $C$ such that
\begin{align*}\int_{B_{\rho}(x_0)} |F_A|^{2\alpha}\leq \int_{B_{\rho}(x_0)} (1+|F_A|^2)^{\alpha} \leq C\left (\frac
{\rho}{R}\right )^{4-\delta } \int_{B_{R}(x_0)} (1+|F_A|^2)^{\alpha}+ C \rho^{4-\delta } \end{align*}
for any $\rho<R\leq R_0$.

For a sufficiently small  $R_0$, there is a gauge transformation $\sigma$ (see \cite {U2}) such that $\sigma^* (D_A)=d+A$ with $d^*A=0$ in $B_{R_0}(x_0)$, $A\cdot \nu =0$ on $\partial  B_{R_0}(x_0)$, and
 \[\int_{B_{R_0}(x_0)} R_0^{-2\alpha} |A|^{2\alpha} +|\nabla
A|^{2\alpha} \leq \int_{B_{R_0}(x_0)}|F_A|^{2\alpha}.\]

For each $R\leq R_0$ we  use Lemma 2.3 of  \cite{Isobe} to show that there is a $B\in W^{1,p} (B_R(x_0))$ such that
\[dB=0, \quad d^*B=0 \quad \mbox{in } B_R(x_0)\]
with $B\cdot \nu=A\cdot \nu$ on $\partial B_R(x_0)$. Then we have
\[ \int_{B_{\rho} (x_0)} |B|^{4\alpha} \leq C \left (\frac {\rho}{R}\right )^4 \int_{B_R(x_0)}  |B|^{4\alpha }.\]
Since $(A-B)\cdot \nu =0$ on $\partial B_R(x_0)$ and $d^*(A-B)=0$ in $B_R(x_0)$, we can apply  Lemma 2.5 of \cite {U2} and the Sobolev embedding theorem to get
\[ \int_{B_R(x_0)}  |A-B|^{4\alpha }\leq C R^{4\a -4}\left ( \int_{B_R(x_0)}|dA-dB|^{2\alpha}\right )^2\leq C \left ( \int_{B_R(x_0)}|dA|^{2\alpha}\right )^2.  \]
This implies that for any $\rho< R\leq R_0$,
\begin{align}\label {4.3}
 \int_{B_{\rho} (x_0)} |A|^{4\alpha} \leq C \left (\frac {\rho}{R}\right )^4 \int_{B_R(x_0)}  |A|^{4\alpha } + C\left ( \int_{B_R(x_0)}|dA|^{2\alpha}\right )^2.   \end{align}
From (\ref{4.2}) and (\ref{4.3}), we get
 \begin{align*}
 &\int_{B_{\rho} (x_0)} |dA|^{2\alpha}+|A|^{4\alpha}\,dV\\
  \leq &C \left (\left (\frac {\rho}{R}\right )^4 + \int_{B_R(x_0)}|dA|^{2\alpha}+\varepsilon + R^{4(\alpha -1)}\right )\int_{B_R(x_0)} [ |dA|^{2\alpha}+|A|^{4\alpha }]\,dV\\
  &+CR^4 .  \end{align*}
For each small $\varepsilon$, there is a sufficiently small $R_0$ such that
\[\int_{B_{R_0}(x_0)}|\nabla A|^2\leq C \int_{B_{R_0}(x_0)}|F_A|^2\leq \varepsilon.\]
Then applying Lemma 2.1 of Chapter III in \cite {G} again, we have
\begin{align*}\int_{B_{\rho}(x_0)} |dA|^{2\alpha}+|A|^{4\alpha}\leq   C \rho^{4-\delta } \end{align*}
for a small $\delta >0$ and for any $\rho\leq  R_0$. Then it follows from Lemma 2.3 of \cite{U2} that
 \begin{align*}
 &\int_{B_{\rho} (x_0)} |\nabla A|^{2}\,dV
  \leq C \left (\frac {\rho}{R}\right )^4 \int_{B_R(x_0)}|\nabla A|^{2}+C  \int_{B_R(x_0)} |\nabla (A-B)|^2\,dV\\
  &\leq C \left (\frac {\rho}{R}\right )^4 \int_{B_R(x_0)}|\nabla A|^{2}+ C \int_{B_R(x_0)} |dA|^2\,dV\\
  &\leq C \left (\frac {\rho}{R}\right )^4 \int_{B_R(x_0)}|\nabla A|^{2}+ C_2R^{4-\frac {\delta} {\alpha}}    \end{align*}
for any $\rho<R\leq R_0$. Applying Lemma 2.1 of Chapter III in \cite {G} again, we obtain
\begin{align*}
 \int_{B_{\rho} (x_0)} |\nabla A|^{2}\,dV
  \leq C \rho^{4-\frac {\delta}{\alpha}} \quad \forall \rho \leq R_0.   \end{align*}
  The Morrey  growth lemma implies that $A$ is H\"older continuous at a neighborhood of the point $x_0\in M$. A standard procedure (e.g. \cite {G},  \cite {Isobe}, \cite {GH}) yields that $\nabla A$ is H\"older continuous and then smooth.

  \begin{acknowledgement}
 { The authors would like to thank Hao Yin for many useful discussions and suggestions.}
\end{acknowledgement}

\bibliographystyle{plain}

\begin{thebibliography}{10}

 \bibitem{AB}
  M. Atiyah and R. Bott,
     \newblock{\em The Yang-Mills equations over Riemann
 surfaces},
     \newblock{Phil. Trans. Roy. Soc. London A} , {\bf   308} (1982)    524--615.

        \bibitem{AHDM} M. F. Atiyah,  N. J. Hitchin,  V. G. Drinfel'd   and Yu. I. Manin,
     \newblock{\em Construction of instantons},
    \newblock{Phys. Lett. A}, {\bf   65} (1978)    185-187.

\bibitem{BL} 	J.-P. Bourguignon and H.B. Lawson, 	\newblock{\em Stability and isolation phenomena for Yang-Mills fields}, 	\newblock{Commun. Math. Phys.} {\bf 79} (1981),
    189-230.


\bibitem{DW} G. Daskalopoulos and R. Wentworth, \newblock {\em  On the blow-up set of the Yang-Mills flow on K\"ahler surfaces,} \newblock  Math. Z. {\bf 256} (2007) 301--310.


\bibitem{DingTian} W. Ding  and G. Tian, \newblock {\em Energy identity for a class of approximate harmonic maps from surfaces,} \newblock Comm. Anal. Geom. {\bf 3} (1995) 543--554.


\bibitem{G}  M. Giaquinta, \newblock{\em Multiple integrals in the calculus of variations and nonlinear
elliptic systems}, 	\newblock{Princeton Univ. Press} 1983.


\bibitem{GH}  M. Giaquinta and	M.-C. Hong,  	\newblock{\em Partial
regularity of minimizers of special functionals involving forms
and maps}, 	\newblock{Nonlinear Differential Equations and
Applications}  {\bf  11} (2004), 469--490

\bibitem{Gri}  U. Gritsch  	\newblock{\em Morse theory for the Yang-Mills functional via equivariant homotopy theory}, 	\newblock Trans. Amer. Math. Soc.  {\bf 352} (2000), 3473--3493.

\bibitem{HT} 	M.-C. Hong and G. Tian, 	\newblock{\em Asymptotical behavior of the Yang-Mills flow and singular Yang-Mills connections}, 	\newblock{Math. Ann.}  {\bf 330} (2004),
    441--472.

\bibitem{HTY} M.-C. Hong,  G. Tian   and H. Yin, \newblock {\em The Yang-Mills $\alpha$-flow in vector bundles over four manifolds and its applications,} \newblock  To appear in Comm. Math. Helv.

\bibitem{HongYinSU} M.-C. Hong  and H. Yin, \newblock {\em On the {Sacks-Uhlenbeck} flow of {Riemannian} surfaces,} \newblock  Communications in Analysis and Geometry {\bf 21}(2013) 917-955.

\bibitem{Isobe} T. Isobe, \newblock {\em A regularity result for a class of degenerate Yang-Mills connections in critical dimensions}, \newblock Forum Math. {\bf 20} (2008)
    1109--1139.

    \bibitem{Isobe1}
    T. Isobe,
    \newblock{Topological and analytical properties of Sobolev bundles, I: the critical case},
    \newblock{Ann. Glob. Ann. Geom}, {\bf 35} (2009), 277-337.

\bibitem{Lammidentity} T. Lamm, \newblock {\em Energy identity for approximations of harmonic maps from surfaces}, \newblock Trans. Amer. Math. Soc. {\bf 362} (2010) 4077--4097.

\bibitem{LiWang} Y. Li and Y. Wang, \newblock {\em A weak energy identity and the length of necks for a {Sacks-Uhlenbeck}-harmonic map sequence}, \newblock Adv. Math. {\bf 225} (2010)
    1134--1184.

\bibitem{LiW} Y. Li and Y. Wang, \newblock {\em Energy identities for  $\alpha$-harmonic map sequences are not true}, \newblock A preprint 2013.

\bibitem{LW}
        F.H. Lin and C.Y. Wang,
        \newblock{\em Energy identity of harmonic maps flows from surfaces at finite singular time},
        \newblock{Calc. Var. \& PDEs}, {\bf 6} (1998),  369-380.

\bibitem{PalaisMorseHilbert} R. Palais, \newblock {\em Morse theory on {Hilbert} manifolds}, \newblock Topology {\bf 2} (1963) 299--349.

\bibitem{Palais.Foundations} R. Palais, \newblock {\em Foundations of global non-linear analysis}, \newblock Benjamin, 1968.

\bibitem{Parker} 	T. Parker, 	\newblock{\em Bubble tree convergence for harmonic maps}, 	\newblock{J. Differential Geom.}, {\bf 44} (1996), 595--633.

\bibitem{Parker1} 	T. Parker, 	\newblock{\em A Morse theory for equivarint Yang-Mills}, 	\newblock{Duke Math. J.}, {\bf 66} (1992), 337--355.


\bibitem{Riviere} T. Rivi\`{e}re, \newblock {\em Interpolation spaces and energy quantization for Yang-Mills fields}, \newblock  Comm. Anal. Geom. {\bf 10} (2002) 683--708.

\bibitem{SacksUhlenbeck} J. Sacks and K. Uhlenbeck, \newblock {\em The existence of minimal immersions of 2-spheres}, \newblock  Ann. of Math. {\bf 113} (1981) 1--24.

\bibitem{AS}
    A. Schlatter,
    \newblock{Long-time behaviour of the Yang-Mills flow in four dimensions},
    \newblock{Ann. Global Anal. Geom.}, {\bf 15} (1997), no.1, 1-25.

\bibitem{Sedlacek.direct} C. Sedlacek, \newblock {\em A direct method for minimizing the {Yang-Mills} functional over 4-manifolds}, \newblock  Comm. Math. Phys. {\bf 86} (1982)
    515--527.

\bibitem{St1} 	M. Struwe: 	\newblock{\em On the evolution of harmonic maps of Riemannian surfaces}, 	\newblock{Comm. Math. Helv.}, {\bf 60} (1985), 558-581.

\bibitem{StruweYMheat} M. Struwe, \newblock {\em The {Yang-Mills} flow in four dimensions}, \newblock  Calc. Var. \& PDEs {\bf 2} (1994) 123--150.

\bibitem{Struwe} M. Struwe, \newblock {\em Positive solutions of critical semilinear elliptic equations on non-contractible planar domains}, \newblock  J. Eur. Math. Soc. 2 (2000)
    329--388.

\bibitem{T1}
 C. H. Taubes,
\newblock {\em Self-dual connections on non-self-dual 4-manifolds}, \newblock   J. Diff. Geom.    {\bf 17} (1982) 139--170.

 \bibitem{T2}
	C.H. Taubes, 	\newblock{\em A framework for Morse theory for the Yang-Mills functional}, 	\newblock{Invent. Math.}, {\bf 94} (1988), 327-402.

\bibitem{Tian.calibrated} G. Tian, \newblock {\em Gauge theory and calibrated geometry I}, \newblock {Ann. of Math.} {\bf 151} (2000) 193--268.

\bibitem{SSU} 	L.M. Sibner, R.J. Sibner and K. Uhlenbeck, 	\newblock{\em Solutions to Yang-mills equations that are not self-dual}, 	\newblock{Proc. Natl. Acad. Sci. USA}, {\bf 86}
    (1989), 8610-8613.


 \bibitem{U1} K. Uhlenbeck, \newblock {\em Regularity for a class of
nonlinear systems}, \newblock  Acta Math. {\bf  138} (1977)  219--240.


\bibitem{U2} K. Uhlenbeck, \newblock {\em Connections with $L^p$-bounds on curvature}, \newblock Comm. Math. Phys. {\bf 83} (1982) 31--42.

\bibitem{Uhl.removable} K. Uhlenbeck, \newblock {\em Removable singularities in {Yang-Mills} fields}, \newblock Comm. Math. Phys. {\bf 83} (1982) 11--29.



\end{thebibliography}

\end{document}